\newcommand{\z}{\mathbb{Z}}
\renewcommand{\c}{\mathbb{C}}
\renewcommand{\r}{\mathbb{R}}
\newcommand{\q}{\mathbb{Q}}
\newcommand{\h}{\mathbb{H}}
\newcommand{\f}{\mathbb{F}}
\newcommand{\w}{\mathcal{W}^*}
\newcommand{\C}{\mathcal{C}^*}
\newcommand{\ideal}[1]{\langle #1 \rangle}
\newcommand{\ch}{\mathfrak{Ch}^*}
\newcommand{\ob}{\mathcal{O}}
\newcommand{\tob}{\tilde\ob}
\newtheoremstyle{pedro}{}{}{\itshape}{}{\sc}{~--}{ }{\thmname{#1}\thmnumber{ #2}\thmnote{ (#3)}}
\newtheoremstyle{pedroex}{}{}{}{}{\sc}{~--}{ }{\thmname{#1}\thmnumber{ #2}\thmnote{ (#3)}}
\theoremstyle{pedro}
\newtheorem{lem}{Lemma}[section]
\newtheorem{prop}[lem]{Proposition}
\theoremstyle{remark}
\newtheorem{rmk}[lem]{Remark}
\theoremstyle{pedroex}
\newtheorem{ex}[lem]{Example}
\titleformat{\section}{\bf\center}{\S \arabic{section}.}{1em}{}
\titleformat{\subsection}{\bf}{\arabic{section}.\arabic{subsection}.}{.5em}{}
\titleformat{\subsubsection}[runin]{\bf}{$\quad\diamond$}{0.3em}{}
\title{The computation of Stiefel-Whitney classes} 
\author{Pierre Guillot}
\begin{document}

\maketitle
\begin{flushright}
{\em To the memory of Charles Thomas}
\end{flushright}

\begin{abstract}

L'anneau de cohomologie d'un groupe fini, modulo un nombre premier, peut \^etre calcul\'e \`a l'aide d'un ordinateur, comme l'a montr\'e Carlson. Ici ``calculer'' signifie trouver une pr\'esentation en termes de g\'en\'erateurs et relations, et seul l'anneau (gradu\'e) sous-jacent est en jeu. Nous proposons une m\'ethode pour d\'eterminer certains \'el\'ements de structure suppl\'e\-men-taires: classes de Stiefel-Whitney et op\'erations de Steenrod. Les calculs sont concr\`etement men\'es pour une centaine de groupes (les r\'esultats sont consultables en d\'etails sur Internet).

Nous donnons ensuite une application: \`a l'aide des nouvelles informations obtenues, nous pouvons dans de nombreux cas d\'eterminer quelles sont les classes de cohomologie qui sont support\'ees par des cycles alg\'ebri\-ques.

\end{abstract}

\begin{abstract}

The cohomology ring of a finite group, with coefficients in a finite field, can be computed by a machine, as Carlson has showed. Here ``compute'' means to find a presentation in terms of generators and relations, and involves only the underlying (graded) ring. We propose a method to determine some of the extra structure: namely, Stiefel-Whitney classes and Steenrod operations. The calculations are explicitly carried out for about one hundred groups (the results can be consulted on the Internet).

Next, we give an application: thanks to the new information gathered, we can in many cases determine which cohomology classes are supported by algebraic varieties.

\end{abstract}

\pagebreak 
\setcounter{tocdepth}{2}
\tableofcontents
\section{Introduction}

\subsection{Computer calculations \& Stiefel-Whitney classes}

For a long time, it was very common for papers on group cohomology to
point out the lack of concrete, computational examples in the subject
(see for example the introduction to \cite{thomas}). Since then, the
situation has dramatically changed with the observation by Carlson
(see \cite{carl-visual}) that the cohomology ring could be computed in
finite time, by an algorithmic method for which a computer could be
trusted. The reader can check on the Internet (see \cite{carl-url} and
\cite{green-url})
the myriad of examples of cohomology rings which have now been
obtained. 

The question arises then: can we exploit those calculations to tackle
some problems related to the cohomology of groups ? The particular
problem which originally motivated me (and which, as it turned out,
was to play only a secondary role in this paper) was the
following. Since Totaro's paper \cite{totaro}, it is known that the
classifying space $BG$ of a finite group $G$ is a limit of algebraic
varieties (say, over $\c$), and thus one can ask for a description of
the image of the map
$$ CH^*BG \to H^*(BG, \z)  $$
where $CH^*BG$ is the {\em Chow ring} of $BG$. It is similar to the
question posed by the Hodge conjecture, but with some distinctive
features (for example $CH^*BG$ is all torsion when $G$ is finite, so
we cannot be content with a description of the map above after
tensoring with $\q$).

However, a description of $H^*(BG, \f_p)$ as a ring, which is what the
computer provides, if of little help vis-\`a-vis this problem, and
many others. In any case, let us compare the sort of output produced
by the computer with a more traditional answer.

Let us focus on the example of $Q_8$, the quaternion group of order
$8$. At the address \cite{carl-url}, one will find that $H^*(BQ_8,\f_2)$
is an algebra on generators $z, y, x$ of degree $1, 1, 4$
respectively, subject to the relations $z^2 + y^2 + zy=0$ and
$z^3=0$. One also finds a wealth of information on subgroups of $Q_8$
and their cohomology, calculations of transfers and restrictions, as
well as a thorough treatment of the commutative algebra of
$H^*(BQ_8,\f_2)$ (nilradical, Krull dimension, etc).

On the other hand, if we look at the computation by Quillen of the
cohomology of extraspecial groups (see \cite{quillenextra}), one finds
in the case of $Q_8$:
\begin{prop}\label{prop-q8}
There are $1$-dimensional, real representations $r_1$ and $r_2$ of
$Q_8$, and a $4$-dimensional representation $\Delta$, such that
$H^*(BQ_8, \f_2)$ is generated by $w_1(r_1)$, $w_1(r_2)$ and
$w_4(\Delta)$. The ideal of relations is generated by $R = w_1(r_1)^2
+ w_1(r_2)^2 + w_1(r_1)w_1(r_2)$ and $Sq^1(R)$.

Finally, $Sq^1(\Delta) = Sq^2(\Delta) = Sq^3(\Delta) = 0$. 
\end{prop}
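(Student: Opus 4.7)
The plan is to split the statement into three chunks: identifying the representations, establishing the ring presentation, and computing the Steenrod action.

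First I would identify the representations. Since $Q_8^{ab} \cong V := (\mathbb{Z}/2)^2$, all one-dimensional real representations of $Q_8$ factor through $V$; I pick $r_1, r_2$ so that the characters form a basis of $\mathrm{Hom}(V, \mathbb{F}_2)$. Then $w_1(r_1), w_1(r_2)$ are tautologically a basis of $H^1(BQ_8, \mathbb{F}_2) = H^1(BV, \mathbb{F}_2)$; these are the generators called $z, y$ in Carlson's presentation. For $\Delta$ I take the unique irreducible $4$-dimensional real representation, namely the real form of the standard $2$-dimensional complex representation coming from $Q_8 \subset \mathbb{H}^*$. This representation carries both a complex and a quaternionic structure, so $w_1(\Delta) = w_3(\Delta) = 0$ (complex structure) and $w_2(\Delta) = c_1(\Delta) \bmod 2 = 0$ (quaternionic structure kills $c_1$). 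Hence $w(\Delta) = 1 + w_4(\Delta)$.

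For the generators and relations, I would run the Lyndon--Hochschild--Serre spectral sequence of the central extension $1 \to Z \to Q_8 \to V \to 1$, where $Z = Z(Q_8) = [Q_8, Q_8]$. Here $E_2^{*,*} = H^*(BV, \mathbb{F}_2) \otimes \mathbb{F}_2[u]$ with $u$ in bidegree $(0,1)$. A direct cocycle computation from the multiplication table of $Q_8$ identifies the extension class as $e = z^2 + y^2 + zy = R$. By the transgression, $d_2(u) = R$, and Kudo's theorem gives $d_3(u^2) = Sq^1(R) = z^2 y + z y^2$; one checks that $d_5(u^4) = Sq^2 Sq^1(R) = z^4 y + z y^4$ already vanishes modulo $(R, Sq^1 R)$, so $u^4$ survives. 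To identify the surviving generator in degree $4$ with $w_4(\Delta)$, I restrict to the center: the nontrivial element of $Z$ acts as $-I$ on $\Delta$, so $\Delta|_Z = 4\rho$ and $w_4(\Delta)|_Z = (1+u)^4 - 1 = u^4$, whereas $z, y$ restrict to zero on $Z$ since $Z = [Q_8, Q_8]$. A Poincar\'e-series comparison (or Carlson's presentation) then confirms that $\{w_1(r_1), w_1(r_2), w_4(\Delta)\}$ generate and that $R, Sq^1(R)$ generate the ideal of relations.

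Finally, the Steenrod calculation is immediate from Wu's formula applied to $w_4(\Delta)$, using $w_1(\Delta) = w_2(\Delta) = w_3(\Delta) = 0$ established above: the expressions $w_1 w_4$, $w_2 w_4$, and $w_3 w_4 + w_2 w_5 + w_1 w_6$ (for $Sq^1, Sq^2, Sq^3$ respectively) all vanish. The main obstacle is the first half of the middle paragraph: pinning down the extension class $e = R$ requires an explicit cocycle calculation, and one must verify by hand that all further transgressions of powers of $u$ are absorbed by the relations $R$ and $Sq^1(R)$, so that no spurious generators or relations are introduced beyond $E_\infty$.
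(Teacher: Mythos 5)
Your proof is correct, but it takes a genuinely different route from the paper. The paper states this proposition as Quillen's theorem (citing his computation for extraspecial groups) and then re-derives it by its own method: it takes Carlson's computer-verified ring presentation $H^*(Q_8)=\mathbb{F}_2[z,y,x]/(z^2+y^2+zy,\,z^3)$ as \emph{input}, builds the formal ring $\mathcal{W}^*_F(Q_8)=\mathbb{F}_2[w_1(r_1),w_1(r_2),w_4(\Delta)]/(R,Sq^1R)$ purely from representation theory (the relation $\lambda^2(\Delta)=r_1+r_2+r_1\otimes r_2+3$ together with the quaternionic vanishing $w_1(\Delta)=w_2(\Delta)=w_3(\Delta)=0$), and then argues that the canonical map $a$ must send $w_4(\Delta)$ to the unique nonzero degree-$4$ class, since otherwise $H^*(Q_8)$ could not be finitely generated over $\mathcal{W}^*_F(Q_8)$; surjectivity in degree $1$ plus a Poincar\'e-series count then forces $a$ to be an isomorphism. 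You instead compute everything from scratch via the Lyndon--Hochschild--Serre spectral sequence of the central extension, Kudo's transgression theorem, and restriction to the center; your key identities check out ($d_2(u)=R$ is the anisotropic quadratic form, and $Sq^2Sq^1R=z^4y+zy^4=R\cdot Sq^1R$ does lie in $(R,Sq^1R)$, so $u^4$ survives), and the restriction $w_4(\Delta)|_Z=u^4$ correctly identifies the degree-$4$ generator and settles the multiplicative extension. What each approach buys: yours is self-contained and does not presuppose the ring structure, at the cost of an explicit cocycle computation for the extension class and careful bookkeeping of higher differentials; the paper's approach presupposes the ring presentation but avoids spectral sequences entirely, replacing them with a rigidity argument about admissible maps out of $\mathcal{W}^*_F(G)$ --- which is precisely what lets it scale to the hundred-odd groups treated later, whereas the spectral sequence argument is tailored to $Q_8$. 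Your final Wu-formula step for the Steenrod action agrees with the paper's remark that this information comes for free once the generators are Stiefel--Whitney classes.
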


This calls for several comments. First, if $r$ is any real
representation of a (finite or compact Lie) group $G$, then $r$ can be
seen as a homomorphism $r: G \to O(n)$ where $n$ is the real
dimension of $r$. This yields a continuous map $Br: BG \to BO(n)$
and thus a ring homomorphism $Br^*: H^*(BO(n), \f_2) \to H^*(BG,
\f_2)$. The ring $H^*(BO(n),\f_2)$ is polynomial on variables $w_1,
\ldots, w_n$, and the element $Br^*(w_i)$ is written $w_i(r)$ and
called the $i$-th {\em Stiefel-Whitney class} of $r$, a central object
of study in this paper (more details on the definition follow).

Second, the cohomology ring of any space in an {\em unstable algebra},
and is acted on by the {\em Steenrod operations} $Sq^k$, $k\ge
0$. This gives much structure on the cohomology, as will be
examplified below. For the time being, we point out that the
presentation of the cohomology of $Q_8$ is simplified by the use of
Steenrod operations, in the sense that $R$ is the only significant
relation, the other one being obtained by applying $Sq^1$.

Note that these two things are related, for one knows how to compute
the Steenrod operations on $H^*(BO(n), \f_2)$ via {\em
  Wu's formula}, see \cite{milnor}. Since $Br^*$ commutes with the
$Sq^k$, one knows how these operations act on the Stiefel-Whitney
classes. Once we know that the cohomology of a group is generated by
such classes, as is the case for $Q_8$, we get all the information on
Steenrod operations for free.

Note also, finally, that Stiefel-Whitney classes give some
geometric or representation - theoretic meaning to the relations in the
cohomology of a group, in good cases. In the case of $Q_8$ thus, there
is a relation between the representations mentioned in proposition
\ref{prop-q8}, namely:
$$ \lambda^2(\Delta) = r_1 + r_2 + r_1\otimes r_2 + 3  $$
(here ``$+3$'' means three copies of the trivial representation, and
$\lambda^2$ means the second exterior power). There are formulae
expressing the Stiefel-Whitney classes of a direct sum, a tensor
product, or an exterior power: these will be recalled in section
\ref{sec-formal}. In the present case, they give $w_2(r_1 + r_2 +
r_1\otimes r_2 + 3) = w_1(r_1)^2 + w_1(r_2)^2 + w_1(r_1)w_1(r_2)$,
while $w_2(\lambda^2( \Delta))= 0$. The latter takes into account the
fact that $w_1(\Delta) = w_2( \Delta) = w_3 (\Delta) = 0$, which in
turn is a formal consequence of the fact that $\Delta$ carries a
structure of $\h$-module, where $\h$ is the algebra of
quaternions. Putting all this together, we get an ``explanation'' for
the relation $w_1(r_1)^2 + w_1(r_2)^2 + w_1(r_1)w_1(r_2) = 0$ based on
representation theory.

All this extra decoration on the cohomology ring is extremely
useful. For example if we return to the problem, already alluded to,
of computing which cohomology classes are supported by algebraic
varieties, then we have a lot to learn from this new information. The
{\em Chern classes}, which are analogous to Stiefel-Whitney classes
but related to complex representations rather than real ones, and
which can be computed mod $2$ from the Stiefel-Whitney classes, are
always supported by algebraic varieties; this gives a ``lower
bound''. On the other hand, classes coming from the Chow ring are
killed by certain Steenrod operations, and this gives an ``upper
bound''. See \S\ref{sec-chow} for details. 


{\em The main purpose of this paper is to describe a method for the
systematic computation of Stiefel-Whitney classes}, mostly with the
help of a computer. Let us describe our success in the matter.

\subsection{Overview of results}

This paper has a companion, in the form of a computer program. The
source and the results of the
computer runs can be consulted at \cite{pedro-results}. We encourage
the reader to have a look at this page now. The present paper can
largely be seen as an explanation of the program, although it can by
all means be read independently.

It is in the nature of our algorithm that it does not work in all
cases. On the brighter side, it is very much simpler than any
full-blown method for calculating Stiefel-Whitney classes in general
(see the Appendix for a discussion of possible approaches
to the general problem). Also, our basic method can be adjusted for
specific groups and made to work in new cases by small, taylored
improvements. Our original goal however was to constitute, if not a
``database'', at least a significant collection of examples (rather
than deal with a handful of important groups).  

We have focused on the groups of order dividing $64$. We
got a full answer for the 5 groups of order 8, for 13 of the 14 groups
of order 16, for 28 of the 51 groups of order 32, and for 61 of the 267
groups of order 64. Thus we were able to deal with more than 100
groups.

Obtaining a ``full answer'' means the following. When $H^*(BG, \f_2)$ is
generated by Stiefel-Whitney classes, the computer proves it, and
gives the same sort of information as in proposition
\ref{prop-q8}. When $H^*(BG, \f_2)$ is not generated by Stiefel-Whitney
classes, the answer looks as follows. Let us the consider the smallest
example, which is that of the group of order 16 whose Hall-Senior
number is 11. This group is the semidirect product $\z/8 \rtimes
\z/2$ whose centre is a $\z/4$. The computer output is:
\begin{prop}\label{prop-gp16-6}
The cohomology of $G$ is generated by $w_1(r_2)$, $w_1(r_3)$,
$w_4(r_8)$ and a class $x$ of degree 3 which is not in the subring generated by
Stiefel-Whitney classes. Here $r_2$ and $r_3$ are $1$-dimensional real
representations, while $r_8$ is a $4$-dimensional representation of
complex type. The relations are $$w_1(r_2)^2 + w_1(r_3)^2 = 0, \qquad
w_1(r_2)^2w_1(r_3) + w_1(r_2)w_1(r_3)^2 = 0,$$ $$w_1(r_2)x + w_1(r_3)x
= 0, \qquad x^2 = 0.$$

Moreover one has $Sq^1(w_4(r_8)) = 0$, $Sq^2(w_4(r_8)) =
w_4(r_8)w_1(r_2)w_1(r_3)$, and $Sq^3(w_4(r_8))=0$.

Finally, the element $x$ is the same as the $x$ in Carlson's
presentation for $H^*(G)$.
\end{prop}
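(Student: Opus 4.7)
The plan is to anchor everything on Carlson's pre-computed presentation of $H^*(BG,\f_2)$ (which for this particular group already contains a mysterious degree $3$ generator $x$), and then to progressively ``paint'' Stiefel-Whitney names onto the generators of degree $\le 4$ by producing them explicitly as pullbacks from appropriate $BO(n)$'s. First I would list the real irreducible representations of $G=\z/8\rtimes\z/2$ via character theory, compute Frobenius--Schur indicators to separate the real types from the complex types, and label the resulting representations so that the naming matches the one used by the software. The two candidate $1$-dimensional real representations $r_2, r_3$ are singled out by the fact that they descend along the two surjections $G\twoheadrightarrow\z/2$ onto the maximal elementary abelian quotient, so that $w_1(r_2), w_1(r_3)$ form a basis of $H^1(BG,\f_2)$.

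Next I would identify the $4$-dimensional representation $r_8$ as the realification of a faithful complex irreducible (complex type), and compute its Stiefel-Whitney classes using the standard fact that for a realification $w_{\mathrm{odd}}$ vanishes and $w_{2i}$ is the mod $2$ reduction of $c_i$. To pin down $w_4(r_8)$ as an element of Carlson's ring, I would restrict to the maximal elementary abelian subgroups of $G$ (which are at most rank $2$) and compare with Carlson's tabulated restriction maps; since the Chern classes of a representation of a cyclic group are trivial to write down, the restrictions of $w_i(r_8)$ are computable entries of polynomial rings, and by Quillen's detection theorem a class in $H^*(BG,\f_2)/\mathrm{nil}$ is determined by its restrictions. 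This gives me the identification of $w_4(r_8)$ with a specific element in Carlson's presentation, and shows that $w_1(r_2), w_1(r_3), w_4(r_8), x$ together still generate.

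The relations are then verified by expanding each claimed relation in Carlson's basis and checking it reduces to zero; the only non-routine step is confirming that the degree $3$ class $x$ really does not lie in the subring generated by the Stiefel-Whitney classes, which I would do by computing dimensions of homogeneous components of the subring $\f_2[w_1(r_2),w_1(r_3),w_4(r_8)]/(\text{relations above})$ and comparing with the known Poincar\'e series of $H^*(BG,\f_2)$: the defect in degree $3$ is exactly one-dimensional, matching Carlson's extra generator. Finally, the Steenrod operations on $w_4(r_8)$ are extracted from Wu's formula applied to $H^*(BO(4),\f_2)$ and then pulled back: $Sq^1 w_4 = w_1 w_4$, $Sq^2 w_4 = w_2 w_4 + w_1 w_5$, $Sq^3 w_4 = w_3 w_4 + w_1 w_6$, and using that $r_8$ is of complex type so $w_1(r_8)=w_3(r_8)=w_5(r_8)=0$ and $w_2(r_8)$ reduces (via restriction to a detecting elementary abelian) to $w_1(r_2)w_1(r_3)$, one recovers the stated formulas. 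The hard part of the whole enterprise is the verification, purely inside Carlson's ring, that $w_4(r_8)$ matches a specific polynomial in his generators --- this is where the restriction-to-elementary-abelians bookkeeping has to be done carefully and is exactly the part that the computer is best suited to handle.
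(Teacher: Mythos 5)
Your route is genuinely different from the paper's, and it has one concrete gap. The paper never identifies $a(w_4(r_8))$ as an element of Carlson's presentation: it builds the formal ring $\w_F(G)=\f_2[w_1(r_2),w_1(r_3),w_4(r_8)]/(R,S)$ purely from representation theory (the relations come from $\rho_1=\rho_4\otimes\rho_4$ and $\lambda^2(r_8)=1+r_1+r_2+r_3+r_6$), then enumerates \emph{all} maps $\w_F(G)\to H^*(G)$ that are isomorphisms in degree $1$ and make $H^*(G)$ a finite module, and shows they are all equivalent (same kernel, same ideal image); the finite-generation test is what forces $w_4(r_8)\mapsto w$ rather than $0$, and a separate lemma shows that different lifts of the polynomial variable give equivalent maps. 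You instead try to pin down $w_4(r_8)$ honestly inside Carlson's ring by restricting to maximal elementary abelian subgroups and invoking Quillen detection. That is exactly the kind of direct computation the paper deliberately avoids (and relegates to its Appendix), and if completed it would prove something stronger than the paper claims.

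The gap is in the detection step. Quillen's theorem determines a class only modulo the kernel of restriction to elementary abelians, which here is a nonzero nilpotent ideal: $G$ has a unique maximal elementary abelian subgroup $E\cong(\z/2)^2$, and since $x^2=0$ the class $x$ (hence also $yx\in H^4$) restricts to zero in the polynomial ring $H^*(E)$. So your bookkeeping distinguishes $w$ from $y^4$ but cannot distinguish $w$ from $w+yx$, and the sentence ``this gives me the identification of $w_4(r_8)$ with a specific element'' does not follow from what precedes it. To close the argument you must either resolve this ambiguity by other means or prove that all candidates differing by such classes yield the same presentation --- the latter is precisely the content of the paper's lemma on equivalent lifts, and it is not automatic. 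Two smaller points: verifying that the four listed relations \emph{hold} is not the same as verifying they \emph{generate} the whole ideal of relations (you need the Hilbert-series comparison in every degree, not just the degree-$3$ defect); and $G^{\mathrm{ab}}\cong\z/4\times\z/2$, so there are three nontrivial homomorphisms $G\to\z/2$, not two --- the three $1$-dimensional real representations satisfy $r_1=r_2\otimes r_3$, which is where the choice of $w_1(r_2),w_1(r_3)$ as preferred generators comes from. The Steenrod computation via Wu's formula is fine (your universal formulas for $Sq^2w_4$ and $Sq^3w_4$ have wrong lower-order terms, but they vanish in $H^*(BO(4))$ anyway) and agrees with the paper.
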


The only piece of information missing is the action of the Steenrod
operations on $x$. However, one can recover this ``by hand'', knowing
that $x$ is the same as Carlon's $x$: indeed, on Carlson's page
\cite{carl-url} we see that $x$ is a transfer of an (explicitly given)
element in the cohomology of an elementary abelian $2$-subgroup of
$G$. Transfers commute with Steenrod operations, and we deduce easily
the value of $Sq^1x$ and $Sq^2x$.

The computer also provides some other details, for example all the
Chern classes, and all the other Stiefel-Whitney classes, are given in
this presentation for $H^*(G)$.

Turning to the application to algebraic cycles, there are $38$ groups
for which we describe the image of $CH^*BG \to H^*(G)$. For example
when $G=Q_8$ this image is generated by $w_1(r_1)^2$, $w_1(r_2)^2$ and
$w_4(\Delta)$. There are $62$ groups in total for which we provide
at least partial information on algebraic cycles, see
\S\ref{sec-chow}.  
\subsection{Strategy \& Organization of the paper}

Given a group $G$, we shall always assume that we have a presentation
of $H^*(BG, \f_2)$ as a ring available (as proposition
\ref{prop-gp16-6} suggests, we have chosen to get this information
from Carlson's webpage). We shall then define a ring $\w_F(G)$ as
follows. As a graded $\f_2$-algebra, $\w_F(G)$ is to be generated by
formal variables $w_j(r_i)$ where the $r_i$'s are the irreducible,
real representations of $G$. Then we impose all the relations between
these generators which the theory of Stiefel-Whitney classes predicts:
relations coming from the formulae for tensor products and exterior
powers, rationality conditions, and so on. (It is perhaps more
accurate to say that we impose all the relations that we can think
of.)

Then one has a map $a : \w_F(G) \to H^*(BG, \f_2)$ with good properties:
namely, it is an isomorphism in degree $1$, and turns the cohomology
of $G$ into a {\em finitely generated} module over $\w_F(G)$. The key
point is that, in fact, there are {\em very few} maps between these
two rings having such properties (in practice, there are so many
relations in $\w_F(G)$ that there are few well-defined maps out of
this ring anyway). 

The slight twist here is that, unlike what you might expect, {\em we
  do not compute the effect of the map $a$}. Rather, we write down an
exhaustive list of all the maps $\w_F(G)\to H^*(BG, \f_2)$ having the
same properties as $a$, and it turns out, most of the time, that all
these maps have the same kernel and ``essentially'' the same image
(the word ``essentially'' will be justified later). More often than
not, all the maps are surjective; let us assume in this introduction
that it is so for a given $G$, postponing the more difficult
cases. Since $a$ is among these maps (without our knowing which one it
is!), we know its kernel, and we have a presentation of $H^*(BG,
\f_2)$ as a quotient of $\w_F(G)$, that is a presentation in terms of
Stiefel-Whitney classes. The computation of Steenrod operations
becomes trivial.

As a toy example, we may come back to $G=Q_8$. In this case one has
$$ \w_F(G) = \frac{\f_2[w_1(r_1), w_1(r_2), w_4(\Delta)]} {(R, Sq^1(R))}  $$
where $R=w_1(r_1)^2 + w_1(r_2)^2 + w_1(r_1)w_2(r_2)$. It is apparent
that $\w_F(G)$ is abstractly isomorphic with $H^*(G,\f_2)$;
Quillen's theorem states much more specifically that the map $a$ is an
isomorphism. Our approach, reducing to something trivial here, is to
note that there are only two classes in degree $4$ in the cohomology
ring, namely $0$ and an element $x$ which generates a polynomial
ring. If the image under $a$ of the Stiefel-Whitney class
$w_4(\Delta)$ were $0$, then $H^*(G,\f_2)$ could not be of finite type
over $\w_F(G)$. Thus $a(w_4(\Delta))=x$. Since $a$ is an isomorphism
in degree $1$, it must be surjective; for reasons of dimensions it is
an isomorphism. In this fashion we recover Quillen's result from the
presentation of the cohomology as given by Carlson and a simple game
with $\w_F(G)$, and this (in spirit if not in details) is what our program will do.  Now, describing $\w_F(G)$ explicitly is extremely long if
one proceeds manually, but it is straightforward enough that a
computer can replace us.

We insist that we are not able to give an
expression for the Stiefel-Whitney classes in terms of the generators
originally given in the presentation of $H^*(BG,\f_2)$ that we start
with. Thus we do not ``compute'' the Stiefel-Whitney classes in the
sense that one might have expected. For this reason, we have found it
worthwile to collect in an Appendix a review of the methods that one
could use in order to actually perform these computations (in the
sense, say, of obtaining cocycle representatives for the
Stiefel-Whitney classes relative to a given projective
resolution). Our objective is twofold: on the one hand, we hope to
convince the reader that these computations are considerably difficult
indeed, and that we should be so lucky to have a ``trick'' to avoid
them; on the other hand, we also hope that the suggestions we make in
the Appendix will actually be useful to anyone wishing to
take up the challenge. We describe three ways to attack the
calculations, none of which I have seen presented in the literature as
a computational device (though they each rely on classical results).

It is perhaps useful at this point to comment on the logic underlying
this paper. After reflecting on the difficulties arising in the
computation of Stiefel-Whitney classes, as exposed in the Appendix,
one wishes to calculate a minimal number of them. Certainly if a
representation can be expressed in terms of others using direct sums,
tensor products and exterior powers, then there is no need to compute
its Stiefel-Whitney classes separately. The ring $\w_F(G)$ was
originally designed to keep track of all such redundancies in a
compact way. Subsequently, it has come as a genuine surprise that this
ring made the computations so much simpler that, in many cases, there
was nothing left to do.

\subsubsection{}
The paper is organized as follows. In section \ref{sec-formal}, we introduce
the ring $\w_F(G)$, whose definition is a bit lengthy. The map $a$
will appear naturally. In section \ref{sec-algo}, we describe in
details our algorithm to find a presentation for the cohomology of $G$
with the help of $\w_F(G)$, as outlined above. In section
\ref{sec-results}, we comment on the experimental results which we
have had. Finally in section \ref{sec-chow}, we apply the preceding
results to the study of the ``cycle map'' between the Chow ring and
the cohomology of $BG$.

\subsubsection{Acknowledgements.}
It is a pleasure to thank David Green and Jon Carlson for their early
interest in this work. I am also grateful to Alain Sartout and Pierre
Navarro for their patience and help with the servers here in
Strasbourg. My thanks extend to William Stein for advertising this
work on the SAGE website, and for being generally helpful on the SAGE
forum. 

This project would have been impossible to realize without the help of
many wonderful software packages. Crucial use was made of the GAP
algebra system, the Python programming language, and the SAGE
mathematical suite. Many tools coming from the GNU Free Software
Foundation have been essential, most of all the g++ compiler. (I could
also mention GNU/linux in general, emacs, and also Latex.) 

No use was made of any commercial software.

\subsubsection{}
This paper is dedicated to the memory of Charles Thomas. Charles
lectured me on Lie groups when I was a graduate student, and was
always available for challenging discussions during my PhD. He was
very fond of characteristic classes, and explained on many occasions
how the $\lambda $-ring structure on the representation ring of a
group was a much under-used tool. I hope that the computations which
follow, for which the $\lambda$-operations play such a key role, would
have had some appeal to him.

\section{Formal rings of Stiefel-Whitney classes}\label{sec-formal}

From now on, we shall write $H^*(G)$ for the mod $2$ cohomology of the
finite group $G$. Occasionally we may use the notation $H^*(BG)$ in
order to emphasize a topological context.

\subsection{Formal rings}
Let $r_1, \ldots, r_m$ denote the isomorphism classes of real,
irreducible representations of $G$, and let $n_i$ be the real
dimension of $r_i$. Each $r_i$ gives rise, by choice of a basis and a
$G$-invariant inner product, to a homomorphism $G\to
O(n_i)=O_{n_i}(\r)$. The latter is well-defined up to conjugacy in
$O(n_i)$, and we also use the notation $r_i$ for any choice of
homomorphism. Note that the homotopy class of $Br_i: BG \to BO(n_i)$ is
also well-defined.

Consider now the ring
$$ A_G^* = \bigotimes_{i=1}^m H^*(BO(n_i)).  $$
This $A_G^*$ is a polynomial ring on generators which we write $\bar
w_j(r_i)$, for $1\le i \le m$ and $1\le j \le n_i$. There is a natural
map
$$ \pi=\pi_G: A_G^* \to H^*(BG)  $$
obtained by tensoring together the induced maps $Br_i^*$. The image of
$\bar w_j(r_i)$ under $\pi$ is of course $w_j(r_i)$, the $j$-th
Stiefel-Whitney class of $r_i$.

There is a modest interpretation of $A^*_G$ (and $\pi$) in ``universal''
terms. For this, we need some notations. In the presence of a graded
ring $R^*$, we write $R^\times$ for the group of elements $(a_n)$ in
the product $\prod_n R^n$ such that $a_0=1$. We write such elements
$1+ a_1 + a_2 + \cdots $ and multiply them in the obvious way. Then,
writing $R_\r(G)$ for the real representation ring of $G$, the {\em
  total Stiefel-Whitney class} is the group homomorphism
$$ \begin{array}{crcl}
w : & R_\r(G) & \longrightarrow & H^\times(G), \\  
    &  \rho   & \mapsto & 1 + w_1(\rho) + w_2(\rho) + \cdots 
\end{array}$$
defined by sending the generator $r_i$ of the free abelian group
$R_\r(G)$ to $1 + w_1(r_i) + w_2(r_i) + \cdots$. This extends the
above definition of $w_j(-)$ to representations which are not
necessarily irreducible (and even to virtual representations). Of
course we could also have given an extended definition directly for an
arbitrary representation, exactly as above: it is then a nontrivial,
but very well-known, fact that the two definitions coincide.

Consider now the following diagram:

$$
\xymatrix{
    R_\r(G) \ar[r]^{f} \ar[d]_{\bar w} & R^\times  \\
    A^\times_G \ar@{.>}[ru]_{g} & 
  }
$$

Here $R^*$ is any graded ring, and $R^\times$ is as above, while $f$
is any group homomorphism such that $f(r_i)$ is zero in degrees
greater than $n_i$. The map $\bar w$ sends $r_i$ to $1 + \bar w_1(r_i
) + \bar w_2(r_i) + \cdots $. The universality of $\bar w$ can be
expressed by saying that the dotted arrow $g$ always exists, making the
triangle commute. What is more, $g$ always comes from an underlying
map of graded rings $A^*_G \to R^*$, and the latter is unique.

Taking $R^*=H^*(G)$ and $f=w$, the map $\pi$ can then be seen as being
induced by universality.

This brings us to the following definition. Any ring which is obtained
as a quotient of $A_G^*$ by an ideal contained in $\ker\pi$ will be
called a {\em formal ring of Stiefel-Whitney classes}. As the name
suggests, we shall obtain examples of such rings by looking at formal
properties of Stiefel-Whitney classes, as we have just done with the
property ``$w_j(r_i)=0$ when $j>n_i$''. Each example $F^*$ will come
equipped with a map $R_\r(G)\to F^\times$ which is universal
among certain maps, but we shall leave to the reader this
interpretation.

An extreme example of formal ring, thus, is $A_G^*/\ker\pi$, which we
denote by $\w(G)$. It can be thought of as a subring of $H^*(G)$,
namely the subring generated by all the Stiefel-Whitney
classes. Eventually we shall end up being able to compute $\w(G)$ in
many cases, and our main tool is the use of other formal rings, which
we use as approximations to $\w(G)$.

\subsection{Formal properties}\label{subsec-formal}
The definition of $A_G^*$ (in universal terms) uses only the fact that
$w_j(r_i)$ vanishes when $j$ is large, and implicitly the formula for
the Stiefel-Whitney classes of a direct sum (in that $w$ is a group
homomorphism). We shall now review the other familiar properties of
Stiefel-Whitney classes.

\subsubsection{Rationality.}
Let $V$ be a real and irreducible representation of $G$. Schur's lemma
says that $K= End_G(V)$ is a field (not necessarily
commutative). Since $K$ must contain $\r$ in its centre, it follows
that $K$ must be one of $\r$, $\c$ or $\h$. Accordingly, $V$ is said
to be of real, complex, or quaternion type.






The consequences on Stiefel-Whitney classes are as follows. If $r_i$
is of complex type, then $r_i: G\to O(n_i)$ can be factorized as a
composition $$G\to U(d_i) \to O(n_i)$$ where $U(d_i)$ is the unitary
group, $n_i=2d_i$, and the second arrow is realification. Thus we can
also write:
$$ Br_i^* : H^*BO(n_i) \to H^*BU(d_i) \to H^*BG.  $$
Since the cohomology of $BU(d_i)$ is concentrated in even degrees, we
conclude that $w_{2j+1}(r_i) = 0$ when $r_i$ is of complex type.

Similarly, when $r_i$ is of quaternion type, we have $w_j(r_i)=0$
whenever $j$ is not divisible by $4$, for the cohomology of $BSp(d_i)$
is concentrated in degrees divisible by $4$. Here $Sp(d_i)$ is the
symplectic group and $n_i = 4d_i$.

It is very easy to check whether a given representation is of complex
or quaternion type, see \cite{serre}, \S13.2. In this way we obtain with
little effort a collection of elements of the form $\bar w_j(r_i)$ in
$A_G^*$ which all belong to $\ker\pi$.

\subsubsection{}
Before we proceed, we need to recall the {\em splitting
  principle}. This says roughly that everything happens as if any
representation were a direct sum of $1$-dimensional representations,
as far as computing the Stiefel-Whitney classes goes. More precisely,
given real representations $\alpha$ and $\beta$ of dimensions
$n_\alpha$ and $n_\beta$ respectively, one may find an injection of
$H^*(G)$ into a ring in which we have factorizations
$$ w(\alpha ) = \prod_{k=1}^{n_\alpha } (1 + a_k)  $$
and
$$ w(\beta ) = \prod_{\ell=1}^{n_\beta } (1 + b_\ell)  $$
where each $a_k$ and $b_\ell$ has degree $1$. Thus one recovers
$w_n(\alpha )$ as the $n$-th elementary symmetric function in the
``roots'' $a_k$, and likewise for $\beta $. The formulae below will be
given in terms of the roots. This traditional choice avoids
introducing lots of universal polynomials with awkward names.

\subsubsection{Tensor products.}
One has the following well-known formula:
$$ w(\alpha \otimes \beta ) = 
\prod_{\tiny \begin{array}{c}
    1\le k \le n_\alpha  \\
    1 \le \ell \le n_\beta  
\end{array} } 
(1+ a_k + b_\ell). 
$$
The reader should notice that the formula is strictly associative, in
the sense that the two universal formulae for the total
Stiefel-Whitney class of $\alpha \otimes \beta \otimes \gamma $ which
you could deduce from the result above would be precisely the same.
Likewise, it is strictly commutative. The fact that the tensor product
operation is associative and commutative up to isomorphism only
guarantees, {\em a priori}, that the formula is associative and
commutative in $H^*(G)$, for all $G$; since we can consider the
universal example of orthogonal groups and their defining
representations, however, this is enough. We shall use trivial remarks of this sort without comments in
the sequel. They are of some importance nonetheless, as we sometimes
work in the ring $A_G^*$ before applying $\pi$ to reach $H^*(G)$.

To exploit this, we look at the presentation 
$$ R_\r(G)= \z[r_1, \ldots,r_m] / \mathfrak{a}.  $$
For any $x\in \mathfrak{a}$, we wish to obtain a relation $T(x)\in
A_G^*$ which lies in $\ker\pi$. We need some care to make sure that
the computation can be done in finite time, and in particular we want
to avoid the computation of inverses of elements in the group
$A_G^\times$. We proceed thus: write $x=P - Q$ where $P$ is the sum of
the terms of $x$ which have {\em positive} coefficients. Then $Q$ also
has positive coefficients. One may obtain an element $T_P$ in
$A^\times_G$ by computing the total Stiefel-Whitney class of each term
of $P$ according to the rule above for tensor products, and then
multiply out (in $A_G^\times$) the results for the various
terms. Proceed similarly for $T_Q$, working with $Q$ instead of
$P$. Then put $T(x)=T_P - T_Q$ (which is also $T_Q - T_P$ as we are in
characteristic $2$). That $T(x)\in \ker\pi$ follows from the fact that
$x=0$ in $R_\r(G)$ and the fact that the formula for tensor products
indeed holds, in $H^*(G)$.

We note that, if one writes $x=P' - Q'$ for any $P'$ and $Q'$ having
positive coefficients, then $P'=P+S$ and $Q'=Q+S$ for some polynomial
$S$. Then $T_{P'} - T_{Q'}= T_S(T_P - T_Q)$. Since $T_S$ is a unit in
every truncated ring $A^{<N}_G$, it follows that $T(x)=u(T_{P'} -
T_{Q'})$ (here $u$ is a truncation of $T_S^{-1}$). We use this in the
proof of lemma \ref{lem-tensor-rels} below.

\begin{ex}
Let $G=\z/4$. Then $G$ has three real, irreducible representations:
the trivial one, the one-dimensional representation $\alpha$ coming
from the projection $\z/4 \to \z/2$, and the $2$-dimensional
representation $\beta$ obtained by viewing $G$ as the group of $4$-th
roots of unity in $\c$.

We have
$$ R_\r(G) = \frac{\z[\alpha,\beta ]} {(\alpha ^2 - 1,~\beta ^2 - 2
  \alpha    - 2, ~\alpha \beta  -\beta )}.  $$
Consider the relation $\alpha \beta  = \beta $. The formula for tensor products
gives in this case $w(\alpha \beta ) = 1 + w_1(\beta ) +
w_1(\alpha)w_1(\beta) + w_2(\beta)$. This being equal to the total
Stiefel-Whitney class of $\beta$, we have therefore
$w_1(\alpha)w_1(\beta) = 0$. In other words
$$ T(\alpha \beta - \beta ) =   \bar w_1(\alpha)\bar w_1(\beta) \in
\ker\pi. $$
Similarly, looking at $\beta^2 = 2 \alpha + 2$ gives the relation
$w_1(\alpha)^2 = w_1(\beta)^2$, and the element $T(\beta^2 - 2 \alpha
- 2) = \bar w_1(\alpha)^2 - \bar w_1(\beta)^2$ is in $\ker\pi$. The
relation $\alpha^2 = 1$ gives nothing.

Now, the representation $\beta$ has a complex structure, of course. It
follows that $w_1(\beta)=0$, and we will find the element $\bar
w_1(\beta)$ in $\ker\pi$.

Combining all this, we see that $\ker\pi$ contains $\bar w_1(\beta)$
and $\bar w_1(\alpha)^2$. Of course the cohomology of $\z/4$ is known,
and it turns out that $\ker\pi$ is precisely generated by these two
elements. So in this simple case all of $\ker\pi$, and indeed all the
relations in the cohomology, are explained by representation theory.
\end{ex}

All the information available can be got in finite
time:
\begin{lem}\label{lem-tensor-rels}
If $x_1, \ldots, x_n$ generate $\mathfrak{a}$, then any element of the
form $T(x)$ for $x\in\mathfrak{a}$ is in the ideal generated by the
homogeneous parts of the elements
$T(x_1), \ldots, T(x_n)$ in $A_G^*$.
\end{lem}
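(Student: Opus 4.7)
My plan is to repackage $T$ as the ``linearization at $1$'' of a group homomorphism and then exploit tensor-product compatibility. Work in the completion $\widehat{A}_G^* = \prod_n A_G^n$, so that every element of $A_G^\times$ becomes a genuine unit. For $x = P - Q \in \z[r_1, \ldots, r_m]$ with $P, Q$ having non-negative coefficients, set $\rho(x) = T_P \cdot T_Q^{-1} \in \widehat{A}_G^\times$; the paragraph preceding the lemma shows this is independent of the decomposition, and the identity $T_{P+P'} = T_P \cdot T_{P'}$ makes $\rho$ a group homomorphism from $(\z[r_1, \ldots, r_m], +)$ to $(\widehat{A}_G^\times, \cdot)$. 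Since $T(x) = T_Q \cdot (\rho(x) - 1)$ and $T_Q$ is a unit, the $A_G^*$-ideals generated by the homogeneous components of $T(x)$ and of $\rho(x) - 1$ coincide, so it suffices to prove the lemma with $T$ replaced throughout by $\rho(-) - 1$.

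The key step handles multiplication by a single $r_i$. The tensor-product formula $\prod_{k,\ell}(1 + a_k + b_\ell)$, symmetric in each set of formal roots, is a well-defined polynomial expression in $\bar w_j(r_i)$ and the homogeneous components of $y = 1 + y_1 + y_2 + \cdots \in \widehat{A}_G^\times$; this defines a map $\Phi_i: \widehat{A}_G^\times \to \widehat{A}_G^\times$ with $\Phi_i(w(\sigma)) = w(r_i \otimes \sigma)$. Using the splitting principle to introduce formal roots for arbitrary elements of $\widehat{A}_G^\times$ in an extension ring, one obtains the universal identity $\Phi_i(yz) = \Phi_i(y) \Phi_i(z)$, so $\Phi_i$ is a multiplicative group homomorphism. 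Unwinding the definitions gives $\rho(r_i y) = \Phi_i(\rho(y))$ for every $y \in \z[r_1, \ldots, r_m]$. Because $\Phi_i(1) = 1$, the expression $\Phi_i(1 + u) - 1$ is a polynomial in the homogeneous pieces of $u$ with no constant term; hence $\rho(r_i y) - 1$ lies in the graded $A_G^*$-ideal generated by the homogeneous components of $\rho(y) - 1$.

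To conclude, I iterate these two facts. By induction on the number of $r_j$-factors, $\rho(m y) - 1$ lies in the ideal generated by the homogeneous parts of $\rho(y) - 1$ for any monomial $m$; additivity of $\rho$ then extends this to arbitrary $\lambda \in \z[r_1, \ldots, r_m]$. Finally, for $x = \sum_{i=1}^n \lambda_i x_i \in \mathfrak{a}$, multiplicativity of $\rho$ yields
\[
\rho(x) - 1 \;=\; \prod_{i=1}^{n}(1 + v_i) \;-\; 1, \qquad v_i := \rho(\lambda_i x_i) - 1,
\]
which lies in the ideal generated by the $v_i$; and each $v_i$ lies in the ideal generated by the homogeneous parts of $\rho(x_i) - 1$, i.e.\ of $T(x_i)$, as required.

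The main obstacle is verifying that $\Phi_i$ extends to a genuine multiplicative endomorphism of the entire completion $\widehat{A}_G^\times$, not merely of the subgroup $\{w(\sigma)\}$. This is where the splitting principle does the work: a polynomial identity that holds whenever the variables are substituted by elementary symmetric functions of formal roots will hold for arbitrary substitutions in $A_G^*$, and applied to distributivity of $\otimes$ over $\oplus$ this yields the required multiplicativity of $\Phi_i$.
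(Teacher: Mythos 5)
Your overall skeleton --- reduce the statement to (i) additivity of $x\mapsto T(x)$ up to ideal membership and (ii) the effect of multiplying by a single generator $r_i$ --- is the same as the paper's, and packaging $T$ through the homomorphism $\rho$ into the unit group of the completion is a clean way to organize the bookkeeping. But the step you yourself single out as the main obstacle is where the argument breaks: there is no single map $\Phi_i$ on $\widehat{A}_G^\times$. The tensor-product formula $\prod_{k,\ell}(1+a_k+b_\ell)$, rewritten as a polynomial in the elementary symmetric functions of the $b_\ell$, is a \emph{different} polynomial for each value of $n_\beta$; appending a trivial root $b_{n_\beta+1}=0$ multiplies the expression by $\prod_k(1+a_k)=w(r_i)\neq 1$, so the operation is not stable in the number of roots. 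The splitting principle cannot repair this: it tells you that a fixed symmetric identity descends to the elementary symmetric functions, not that the identity is independent of how many roots there are. Concretely, your asserted identity $\rho(r_iy)=\Phi_i(\rho(y))$ already fails for $y=1$: the left-hand side is $w(r_i)$, while the right-hand side is $\Phi_i(1)=1$. For the same reason the universal multiplicativity $\Phi_i(yz)=\Phi_i(y)\Phi_i(z)$ is not a meaningful statement as written.

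The gap is repairable, and the repair is exactly what the paper's proof quietly relies on. Every $y$ you actually feed into the $r_i$-step has the form $m\,x_j$ with $x_j\in\mathfrak{a}$, hence has augmentation (virtual dimension) zero, so in the decomposition $y=P-Q$ the polynomials $P$ and $Q$ have the \emph{same} dimension $d$; consequently one and the same universal polynomial $f=f_d$ computes $T_{Pr_i}$ from the homogeneous parts of $T_P$ and $T_{Qr_i}$ from those of $T_Q$, whence $T_{Pr_i}-T_{Qr_i}$ lies in the ideal generated by the differences $T_P^{(j)}-T_Q^{(j)}$. (Alternatively, you can normalize to get a genuinely dimension-independent multiplicative operation, namely $z\mapsto\prod_{k,\ell}(1+a_k)^{-1}(1+a_k+b_\ell)$, which is stable under adding trivial roots and satisfies $\rho(r_iy)=w(r_i)^{\varepsilon(y)}\cdot\Phi_i^{(0)}(\rho(y))$ with $\varepsilon$ the augmentation.) Either way, you must record somewhere that elements of $\mathfrak{a}$ have augmentation zero; as written, your proof of the key step never uses that $y$ is related to $\mathfrak{a}$ at all, which is why it proves a false identity along the way.
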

\begin{proof}
If $x$ and $y$ are in $\mathfrak{a}$, then let $x= P - Q$ and $y=P' -
Q'$ as above. We have $T(x+y) = u(T_{P+P'} - T_{Q+Q'})$. However
$T_{P+P'}=T_PT_{P'}$, a product in $A_G^\times$ or rather a product of
non-homogeneous elements in $A_G^*$; similarly for $Q$. Thus
$$ T(x+y) = uT_P(T_{P'} - T_{Q'}) + uT_{Q'}(T_P - T_Q) = uT_PT(y) + uT_{Q'}T(x).  $$
So $T(x+y)$ is in the ideal generated by $T(x)$ and $T(y)$.

Further, $T(x) = T(-x)$ clearly.

Finally, assume $x$ is in $\mathfrak{a}$ and $y=r_k$ for some $k$. Write $x= P - Q$. Then $T(xy)=T_{Py} - T_{Qy}$. From the above we see that there is a universal polynomial
$f$ such that $T_{Py}=f(T_P^{(1)}, T_P^{(2)}, \ldots , w_1(y), w_2(y),
\ldots )$, where
$T_P^{(i)}$ is the degree $i$ homogeneous part of $T_P$; moreover the
same $f$ has also $T_{Qy}=f(T_Q^{(1)}, T_Q^{(2)}, \ldots , w_1(y),
w_2(y), \ldots )$. It is
then clear that $T(xy)$ is in the ideal generated by the various
$T_P^{(i)} - T_Q^{(i)}$, which are the homogeneous parts of $T(x)$.

This completes the proof. 
\end{proof}

\subsubsection{Exterior powers.}
We recall the following.
$$ w(\lambda^p r_i)= \prod_{1\le i_1 < \cdots < i_p \le n_i} (1 +
a_{i_1} + \cdots + a_{i_p}).  $$
So the structure of $\lambda$-ring on $R_\r(G)$ will give us relations
between the Stiefel-Whitney classes. Now, the whole $\lambda$-ring
structure is entirely described by the value of $\lambda^p(r_i)$ for
$1 \le p \le n_i$, for there are universal polynomials expressing
$\lambda^p(x+y)$ and $\lambda^p(xy)$ in terms of the various
$\lambda^r(x)$ and $\lambda^s(y)$.

A little more precisely, for each relation $\lambda^p(r_i)=P_{i,p}$
where $P_{i,p}\in\z[r_1,\ldots,r_m]$ has degree $\le 1$ and positive
coefficients, we obtain a corresponding element $L_{i,p}\in A^*_G$
which lies in $\ker\pi$ as follows: compute the total Stiefel-Whitney
class of $\lambda^p(r_i)$ in $A^\times_G$ acording to the rule above,
then compute the total Stiefel-Whitney class of $P_{i,p}$, and call
$L_{i,p}$ the difference between the two (viewed as elements of
$A^*_G$).

Consider then a presentation
$$ R_\r(G) = \z[r_i, \lambda^pr_i | 1\le i\le m,~ 1\le p\le n_i]/\mathfrak{b}. $$
Note that we may combine the formulae for tensor products and exterior
powers, and ``translate'' any relation in $\mathfrak{b}$ into a
relation in $\ker \pi$. The details should be clear by now. Then one
has (with $x_i$ as in the previous lemma):

\begin{lem}\label{lem-ext-rels}
For an element $b\in\mathfrak{b}$, call $R$ the ``translation'' of
$b=0$ into an element of $A^*_G$, following the rules above for tensor
products and exterior powers. Then $R$ is in the ideal generated by
the the homogeneous parts of the elements $L_{i,p}$ and $T(x_i)$.
\end{lem}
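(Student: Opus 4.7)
The plan is to mimic the proof of Lemma \ref{lem-tensor-rels} nearly verbatim, extending it from $\mathfrak{a}$ to the larger ideal $\mathfrak{b}$. First I would observe that $\mathfrak{b}$ is generated, as an ideal of $\z[r_i, \lambda^p r_i]$, by two kinds of elements: the $x_j$'s coming from $\mathfrak{a}$ (in which no $\lambda^p r_k$ appears), and the relations $\lambda^p r_i - P_{i,p}$ expressing the $\lambda$-operations on the chosen generators. The translation of $x_j$ into $A_G^*$ uses only the tensor product rule and coincides with the element $T(x_j)$ from the previous lemma; the translation of $\lambda^p r_i - P_{i,p}$ is, by construction, precisely $L_{i,p}$. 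So the generators of $\mathfrak{b}$ translate into $J$, where $J\subset A_G^*$ denotes the ideal generated by the homogeneous components of the $L_{i,p}$ and $T(x_j)$.

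Next I would verify that $J$ is stable under the two operations needed to sweep out all of $\mathfrak{b}$: addition of two elements already known to land in $J$, and multiplication by a generator of $\z[r_i, \lambda^p r_i]$, which may now be $r_k$ \emph{or} $\lambda^p r_k$. The addition step is copied verbatim from Lemma \ref{lem-tensor-rels}: writing $b = P - Q$ and $b' = P' - Q'$ in positive-coefficient form, one obtains an identity of the form $T(b+b') = u T_P T(b') + u T_{Q'} T(b)$ for a suitable truncated unit $u$, which puts $T(b+b')$ in $J$ as soon as $T(b)$ and $T(b')$ are.

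The multiplication step is where the exterior powers actually intervene. If $y$ is a generator and $b = P - Q$, then each monomial $m$ of $P$ (resp.\ $Q$) acquires a factor of $y$, and the tensor product rule expresses $w(my)$ as a universal polynomial $f$ in the homogeneous pieces of $w(m)$ and $w(y)$. The class $w(y)$ is known in either case: $w(r_k) = 1 + \bar w_1(r_k) + \cdots + \bar w_{n_k}(r_k)$, or, when $y = \lambda^p r_k$, the specific element $w(\lambda^p r_k) = \prod_{i_1 < \cdots < i_p}(1 + a_{i_1} + \cdots + a_{i_p})$ coming from the exterior power formula. Either way it is a fixed element of $A_G^\times$, and the same universal polynomial $f$ governs $T_{Py}$ and $T_{Qy}$ in terms of the homogeneous parts of $T_P$ and $T_Q$ respectively. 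Hence $T(by) = T_{Py} - T_{Qy}$ lies in the ideal generated by the homogeneous components of $T(b) = T_P - T_Q$, so $J$ is stable under multiplication by either type of generator.

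Combining these three steps yields, by induction on the length of a presentation of $b$, that $T(b)\in J$ for every $b\in\mathfrak{b}$, as required. The only point that deserves a moment's care is that our tensor-product translation rule is strictly associative and commutative inside $A_G^*$, so that the translation of a monomial is genuinely well-defined and the universal polynomial $f$ above exists on the nose; but this was already flagged in the discussion preceding Lemma \ref{lem-tensor-rels}, so no further verification is needed. I do not expect any real obstacle beyond this bookkeeping: the exterior powers enter only through supplying a second kind of generator with a known total Stiefel-Whitney class, and once this is in hand the argument is formally identical to the tensor-product case.
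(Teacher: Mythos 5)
Your proof is correct and takes essentially the same route as the paper's: both rest on decomposing any $b\in\mathfrak{b}$ into an element of $\mathfrak{a}$ plus an element of the ideal generated by the $\lambda^p(r_i)-P_{i,p}$ (the paper justifies this by substituting $P_{i,p}$ for $\lambda^p(r_i)$ in $b$, which is the standard argument behind your generation claim), and then run the addition/multiplication-by-a-generator stability argument of Lemma \ref{lem-tensor-rels}. The paper compresses that second half into ``an argument as in the previous proof,'' which is precisely what you spell out.
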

\begin{proof}
If we substitute $P_{i,p}$ for $\lambda^p(r_i)$ into $b$, we get a
polynomial in $\z[r_1, \ldots, r_m]$ which evaluates to $0$ in
$R_\r(G)$, that is, an element of $\mathfrak{a}$. So $b$ can be
written as the sum of an element of $\mathfrak{a}$ and an element in
the ideal generated by the elements $\lambda^p(r_i) - P_{i,p}$. The
result now follows easily by an argument as in the previous proof.
\end{proof}

\begin{rmk}
The reader who feels uncomfortable with the details of lemma
\ref{lem-tensor-rels} and \ref{lem-ext-rels} will be reassured to know
that we do not use them in the sequel, strictly speaking. They
motivate our decision to give priority to the elements $T(x_i)$ and
$L_{i,p}$, but this could have been presented as an arbitrary decision
without breaking the logic.
\end{rmk}

\begin{ex}
We return to the example of $G=Q_8$ already considered in the
introduction. This group has three $1$-dimensional, irreducible, real
representations $r_1$, $r_2$ and $r_3$, and an irreducible,
$4$-dimensional, real representation $\Delta$ of quaternion type. We
have $r_3 = r_1r_2$ which, as above, yields $w_1(r_3) = w_1(r_1) +
w_1(r_2)$. 

However, we also have
$$ \lambda^2(\Delta) = r_1 + r_2 + r_3 + 3.$$
Computing the Stiefel-Whitney classes of $\lambda^2(\Delta)$ using the formula
for exterior powers, together with the fact that $w_1(\Delta) =
w_2(\Delta)=w_3(\Delta)=0$ since $\Delta$ has quaternion type, yields
in particular $w_2( \lambda^2(\Delta) ) = 0$. On the other hand, one
finds that $w_2(r_1 + r_2 + r_3 + 3) = w_1(r_1) w_1(r_2) + w_1(r_1)
w_1(r_3) + w_1(r_2) w_1(r_3)$, and so this element must be
zero. Combined with the expression for $ w_1(r_3)$, this yields
$w_1(r_1)^2 + w_1(r_2)^2 + w_1(r_1)w_1(r_2) = 0$.

Examining the classes in degree $3$ rather than $2$ gives, similarly,
that $$w_1(r_1)^2w_1(r_2) + w_1(r_1)w_1(r_2)^2 = 0.$$

The relations obtained in degree $1$ and $4$ are redundant.

In other words, we have found the following elements in $\ker\pi$:
$$\bar w_1(\Delta), \bar w_2(\Delta), \bar w_3(\Delta), \bar w_1(r_3) -
\bar w_1(r_1) - \bar w_1(r_2),$$ $$\bar w_1(r_1)^2 + \bar w_1(r_2)^2 +
\bar w_1(r_1) \bar w_1(r_2), \bar w_1(r_1)^2\bar w_1(r_2) + \bar
w_1(r_1) \bar w_1(r_2)^2.$$ Again in this example, it turns out that
$\ker\pi$ is generated by these elements.
\end{ex}

\subsection{Chern classes}\label{sec-chern}
Everything which we have done so far can also be done with Chern
rather than Stiefel-Whitney classes, with minor
modifications. Moreover, one can draw consequences on Stiefel-Whitney
classes by looking at Chern classes, and some of this information
cannot be got otherwise. We proceed to explain this.

Let $\rho_1, \ldots, \rho_s$ denote the complex, irreducible
representations of $G$, and let $d_i$ be the complex dimension of
$\rho_i$. There is a universal ring $A^*_{\c, G}$ which is polynomial on
generators $\bar c_j(\rho_i)$ for $1\le i \le s$ and $1\le j \le
d_i$. There is a map $\sigma : A^*_{\c, G}\to H^*(G)$, and the image of $\bar
c_j(\rho_i)$ is $c_j(\rho_i)$, the $j$-th Chern class of $\rho_i$. One
can take $A^*_{\c, G}$ to be a tensor product of cohomology rings of various
classifying spaces of unitary groups, and $g$ is induced by a
collection of group homomorphisms.

We write $\C(G)$ for $A^*_{\c, G}/\ker \sigma $, and see it as the subring of $H^*(G)$
generated by all Chern classes. A quotient of $A^*_{\c, G}$ by an ideal
contained in $\ker \sigma $ will be called a formal ring of Chern
classes. We obtain examples of formal rings by using the formal
properties of Chern classes, which are identical to those of
Stiefel-Whitney classes: one only has to bear in mind that $c_j(r_i)$
has degree $2j$ and that the ``roots'' of the splitting principle have
degree $2$. Otherwise the formulae for tensor products and exterior
powers are the same.

Now, an element in $\ker \sigma $ yields an element in $\ker\pi$ according to
the following recipe. If $\rho_i$ is the complexification of a real (and
irreducible) representation $r$, then one has $c_j(\rho_i) =
w_j(r)^2$. Note that $r$ is of real type in this case. If on the other
hand, $\rho_i$ is not such a complexification, then we let $r$ denote
its realification: it is still irreducible, and of either complex or
quaternion type. In this case one has $c_j(\rho_i) = w_{2j}(r)$ (while
the odd-degree Stiefel-Whitney classes of $r$ are zero, as already
pointed out). As a result, if we formally replace each element $\bar
c_j(\rho_i)$ by either $\bar w_j(r)^2$ or $\bar w_{2j}(r)$ following
this rule, then indeed any element in $\ker \sigma $ is turned into an
element in $\ker\pi$.

It is perhaps as well to say that we have just described a map
$$ \phi: A^*_{\c, G} \to A^*_G  $$
such that $\sigma = \pi \circ \phi$. It must carry $\ker \sigma $ into
$\ker\pi$.

\subsection{Steenrod operations}\label{sec-steenrod}
The ring $A^*_G$ is naturally an {\em unstable algebra}, so we have
operations $Sq^k$ for $k\ge 0$ on it. Of course $H^*(G)$ is also an
unstable algebra, and $\pi$ is compatible with the operations. As a
result, the ideal $\ker\pi$ is stable under the Steenrod operations.

Now given any ideal $I$ in $A^*_G$, there is a unique smallest ideal
$Sq(I)$ containing it and stable under each $Sq^k$ (namely the intersection of
all such ideals). If $I\subset \ker\pi$, then $Sq(I) \subset\ker\pi$.

It is easy to compute $Sq(I)$ concretely. If $I$ is generated by $t_1,
t_2, \ldots,t_\ell$, then either $I$ is ``Steenrod stable'' or the
ideal $I_2$ generated by all elements $Sq^kt_i$ ($1\le i \le\ell$ and
$0\le k < |t_i|$) is strictly bigger than $I$. If $I_2$ is not
Steenrod stable, we get a strictly bigger ideal $I_3$ in the same
fashion, and so on. Because $A^*_G$ is noetherian, this process has to
stop, and we obtain $Sq(I)$ in finite time.

\subsection{The ring $\w_F(G)$}
We shall now describe a particular formal ring of Stiefel-Whitney
classes, to be denoted $\w_F(G)$, which combines all the relations
which we have been discussing.

We proceed as follows:
\begin{itemize}
\item First, we let $I \subset \ker\pi$ denote the ideal generated by
  all the elements $T(x_i)$ and $L_{i,p}$ as in the lemmas
  \ref{lem-tensor-rels} and \ref{lem-ext-rels}, together with all the
  ``rationality'' relations. In other words, we consider all the
  relations in $\ker\pi$ which are discussed in section \ref{subsec-formal}.
\item Similarly, we define $J\subset\ker \sigma $ using the relations
  coming from the tensor product formula and the exterior power
  formula, only with Chern rather than Stiefel-Whitney classes.
\item Next, we consider the ideal $I'$ generated by $I$ and $\phi(J)$
  (see \S\ref{sec-chern}).
\item Finally, we take $I''=Sq(I')$ as in \S\ref{sec-steenrod}. We
  define
$$ \w_F(G)= A^*_G / I''.  $$
\end{itemize}

There is a surjective map $\w_F(G) \to \w(G)$. Composing it with the
inclusion into $H^*(G)$ induced by $\pi$, we obtain a map $a: \w_F(G)
\to H^*(G)$.

\begin{prop}
The map $a$ is an isomorphism in degree $1$, and turns $H^*(G)$ into a
finitely generated $\w_F(G)$-module.
\end{prop}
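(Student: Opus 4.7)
The plan has two independent parts.

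\emph{Degree 1 isomorphism.} I would first reduce the set of degree-one generators of $\w_F(G)$. The degree-one piece of $A_G^*$ is $\bigoplus_i \f_2 \bar w_1(r_i)$. Rationality forces $\bar w_1(r_i) = 0$ whenever $r_i$ is of complex or quaternion type. For $r_i$ of real type with $n_i > 1$, the character $\chi := \lambda^{n_i}(r_i) = \det r_i$ is a one-dimensional real representation, and the top exterior-power formula gives $w(\chi) = 1 + (a_1 + \cdots + a_{n_i})$; expressed in elementary symmetric functions, this reads $w(\chi) = 1 + \bar w_1(r_i)$, so the relation $L_{i,n_i}$ contains $\bar w_1(r_i) - \bar w_1(\chi)$ in degree one. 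After these reductions, $\w_F(G)^1$ is spanned by the $\bar w_1(\chi)$ with $\chi$ ranging over the (multiplicative) group $\hat G$ of one-dimensional real characters of $G$.

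Now $\hat G$ is canonically isomorphic to $\mathrm{Hom}(G, \f_2) = H^1(G, \f_2)$. The tensor-product formula applied to the relation $\chi_1 \cdot \chi_2 = \chi_1 \otimes \chi_2$ in $R_\r(G)$ yields $\bar w_1(\chi_1) + \bar w_1(\chi_2) = \bar w_1(\chi_1 \otimes \chi_2)$ in $\w_F(G)^1$, and the relation $r_1 = 1$ forces $\bar w_1(r_1) = 0$. Thus there is a well-defined surjective $\f_2$-linear map $\hat G \to \w_F(G)^1$, $\chi \mapsto \bar w_1(\chi)$. Its composite with $a$ is the classical identification $\hat G = H^1(G, \f_2)$ sending a character to its first Stiefel--Whitney class, so both maps must be isomorphisms, and $a$ in particular.

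\emph{Finite generation.} For this I would invoke the real, mod-2 version of Venkov's theorem: for any faithful real representation $\rho : G \to O(n)$ (e.g.\ the real regular representation), $H^*(G)$ is a finitely generated module over the image of $H^*(BO(n)) \to H^*(G)$, which is the subring generated by the Stiefel--Whitney classes of $\rho$. Decomposing $\rho = \bigoplus_i r_i^{\oplus m_i}$ and applying the direct-sum formula $w(\rho) = \prod_i w(r_i)^{m_i}$, each $w_j(\rho)$ is a polynomial in the $w_k(r_i)$ and therefore lies in $\w(G)$. Hence $H^*(G)$ is finite over $\w(G) = a(\w_F(G))$, and a fortiori over $\w_F(G)$ via $a$.

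The only delicate step is the degree-one reduction: one must verify that the relations assembled in Section \ref{subsec-formal} really contain the specific elements $\bar w_1(r_i) - \bar w_1(\det r_i)$ and $\bar w_1(\chi_1) + \bar w_1(\chi_2) - \bar w_1(\chi_1 \otimes \chi_2)$, which together collapse $\w_F(G)^1$ onto $\hat G$. Both are immediate specialisations of the splitting-principle formulae, so no hidden obstruction appears. The finite-generation half is then essentially a black-box invocation of Venkov combined with the direct-sum formula.
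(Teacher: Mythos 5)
Your proof is correct and takes essentially the same route as the paper's: the degree-one claim rests on the identification $H^1(G)=\mathrm{Hom}(G,\z/2\z)$ (which the paper asserts in one line and you usefully flesh out by checking that the rationality, top-exterior-power, and tensor-product relations collapse $\w_F(G)^1$ onto the group of one-dimensional real characters), while the finite-generation claim is exactly Venkov's theorem, which you cite as a black box and the paper proves on the spot via the fibration $O(n)/G\to BG\to BO(n)$ and the Serre spectral sequence. No substantive difference in strategy.
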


\begin{proof}
The first point follows from the isomorphism $H^1(G) = Hom(G,
\z/2\z)$. For the second point, we embed $G$ into an orthogonal group
$O(n)$, and consider the fibration $O(n)/G \to BG \to BO(n)$. Since
$O(n)$ is compact, the homogeneous space $O(n)/G$ has finitely many
cells, and it follows from the Serre spectral sequence that $H^*(G)$
is finitely generated as an $H^*BO(n)$-module. {\em A fortiori}, it is
also finitely generated as a $\w_F(G)$-module.
\end{proof}

\begin{ex}\label{ex-gp16-6-replog} Let us consider the group $G$ of order 16 which appears in
  proposition \ref{prop-gp16-6}. As indicated, this group is the only
  semidirect product $\z/8 \rtimes \z/2$ whose centre is cyclic of
  order $4$. The nonzero element in the $\z/2$ factor acts on the
  $\z/8$ factor by multiplication by $5$.

There are $10$ conjugacy classes, and so $10$ complex, irreducible
representations. Leaving out the trivial one, the character table
looks like this:

$$\begin{array}{|c|c|c|c|c|c|c|c|c|c|c|}
\hline
\textnormal{Conjugacy class} & 1& 2& 3& 4& 5& 6& 7& 8& 9& 10\\
\hline
\rho_1 &     1& -1& 1&  1&  1& -1& -1&  1&  1& -1\\
\rho_2 &     1&  1& -1&  1&  1& -1&  1& -1&  1& -1\\
\rho_3 &     1& -1& -1&  1&  1&  1& -1& -1&  1&  1\\
\rho_4 &     1&  i&  1& -1&  1&  i& -i& -1& -1& -i\\
\rho_5 &     1& -i&  1& -1&  1& -i&  i& -1& -1&  i\\
\rho_6 &     1&  i& -1& -1&  1& -i& -i&  1& -1&  i\\
\rho_7 &     1& -i& -1& -1&  1&  i&  i&  1& -1& -i\\
\rho_8 &     2&  0&  0&  2i& -2&  0&  0&  0& -2i&  0\\
\rho_9 &     2&  0&  0& -2i& -2&  0&  0&  0&  2i&  0\\
\hline
\end{array}
$$
Here we have ordered the conjugacy classes arbitrarily (in fact, we
follow the choices made by the GAP computer package). The first is the
class of the unit in $G$, and the sizes of the classes are 1, 2, 2,
1, 1, 2, 2, 2, 1, 2. 

This is enough to compute the rationality according to the recipe in
\cite{serre} (Prop. 39). We find that the first three representations are the
complexifications of $r_1$, $r_2$, $r_3$, which have thus real
type. The others give irreducible, real representations of complex
type after ``realification''. We let $r_4$, $r_6$ and $r_8$ be the
real representations underlying $\rho_4$, $\rho_6$ and $\rho_8$
respectively (these are conjugated to $\rho_5$, $\rho_7$ and $\rho_9$
respectively). The irreducible, real representations of $G$ are
exactly $r_1, r_2, r_3, r_4, r_6, r_8$ together with the trivial
representation. 

Let us explore some of the relations in $\w_F(G)$. From now on, the
elements in this ring will be written $w_j(r_i)$ rather than $\bar
w_j(r_i)$, for simplicity.

We find that $r_1 = r_2 \otimes r_3$, so that $w_1(r_1) + w_1(r_2) +
w_1(r_3) = 0$. Next, we observe that $\lambda^2(r_8) = 1 + r_1 + r_2 +
r_3 + r_6$ which, taking into account that $w_1(r_8) = w_3(r_8) =
w_1(r_6) = 0$ because $r_8$ and $r_8$ have complex type, yields
$$ w_1(r_1)w_1(r_2)w_1(r_3) + w_2(r_6)[ w_1(r_1) + w_1(r_2) +
w_1(r_3) ]=0. $$
Combining this with the previous relation, we get
$$ w_1(r_2)^2w_1(r_3) + w_1(r_2)w_1(r_3)^2 = 0.   \qquad (R) $$
We also note that $\rho_1 = \rho_4\otimes_\c \rho_4$, so that
$c_1(\rho_1) = 2 c_1(\rho_4) = 0$ (mod $2$). However $c_1(\rho_1) =
w_1(r_1)^2 = w_1(r_2)^2 + w_1(r_3)^2$. So
$$ w_1(r_2)^2 + w_1(r_3)^2 = 0.  \qquad (S)$$
As it turns out, these are all the relations that we shall keep, for
we have
$$ \w_F(G) = \frac{\f_2[w_1(r_2), w_1(r_3), w_4(r_8)]} {(R, S)} .  $$
The end of the proof of this is a lengthy exercise for the reader. It
involves showing the following relations:
$$ w_2(r_4) = w_1(r_2)^2 + w_1(r_2)w_1(r_3), \quad w_2(r_6) =
w_1(r_2)w_1(r_3), $$
$$w_2(r_8) = w_1(r_2)w_1(r_3).$$ 
This explains why we keep only the three variables above in
$\w_F(G)$. Also, one should prove that all the other relations that
one throws into $\w_F(G)$ are redundant at this point, which of course
takes a lot of time (and was done with the help of a computer).

\end{ex}

\section{The main algorithm}\label{sec-algo}

In this section we explain in details the procedure outlined in the
introduction. 

\subsection{Notations \& Preliminaries}

\subsubsection{Choice of variables.}

We shall assume that we have for $H^*(G)$ a presentation in terms of
variables $g_1, g_2, \ldots$ and relations.

As for $\w_F(G)$, we have a canonical choice of variables which are
all of the form $\bar w_j(r_i)$, and we have computed the relations
between these which define $\w_F(G)$ in the previous section. We shall
split the variables into three sets, and we shall use the following
rule. 

Assume that $R$ is a ring with a surjective map $P:
k[X_1,\ldots,X_n]\to R$, where $k$ is any field, and let $x_i=
P(X_i)$, for each $i$. Then we shall say that $x_i$, for lack of a
better name, is a {\em polynomial variable} with respect to this presentation if there is a generating
set for $\ker P$ which consists of polynomials not involving
$X_i$. For definiteness, let us rephrase this. Starting with any
generating set for an ideal, one may compute the {\em reduced Grobner
  basis} for this ideal using Buchberger's algorithm (see
\cite{adams}) and this is unique. It is apparent that Buchberger's
algorithm does not introduce new variables, and therefore, a variable
$x_i$ is polynomial if and only if $X_i$ does not appear in any of the
elements of the reduced Grobner basis for $\ker P$. (It also follows that the order on the power products, which is needed for Buchberger's algorithm, is irrelevant here.)

If $x_1, \ldots, x_m$ are polynomial variables for $R$ with respect to
the presentation $P$ ($m\le n$), then one has, putting $S= R/(x_1,
\ldots, x_m)$, the isomorphism $R = S[x_1,\ldots,x_m]$.

We apply this to $\w_F(G)$ and its presentation as a quotient of
$A^*_G$. We shall write $t_1, t_2, \ldots$ for the degree $1$
variables. As for the other variables, we write $p_1, p_2, \ldots$ for
those which are polynomial, and $q_1, q_2, \ldots$ for the others.

Write $\Omega = \f_2[t_1, t_2, \ldots, q_1, q_2, \ldots ]$, a subring
of $\w_F(G)$. Then one has
$$ \w_F(G) = \Omega[p_1, p_2, \ldots ].  $$
Concretely, we shall compute the reduced Grobner basis for $\ker \pi$,
and extract from it a minimal set of generators $R_1, R_2, \ldots$ for
this ideal. The variables not showing up in any $R_k$ are the
polynomial variables (note that some of the $t_i$'s may well be
polynomial, too).

\begin{ex}\label{ex-gp16-6-1}
Throughout this section, we shall follow the example of the group $G$
already considered in proposition \ref{prop-gp16-6} and example
\ref{ex-gp16-6-replog}. The algorithm is particularly simple in this
case, yet it seems to illustrate most of the features of the general
case.

A presentation of $H^*(G)$ is as a quotient of the graded polynomial
ring $\f_2[z, y, x, w]$ with $|z| = |y| = 1$, $|x| = 3$ and $|w| =
4$. The relations are then:
$$ z^2 = 0, \qquad zy^2=0,  $$
$$ zx = 0, \qquad x^2 = 0.$$
(These form a Grobner basis.)

On the other hand, as already mentioned, we find that $\w_F(G)$ is a
quotient of the polynomial ring $\f_2[w_1(r_2), w_1(r_3), w_4(r_8)]$
where the subscript gives the degree, for some representations $r_2$,
$r_3$ and $r_8$. The relations are:
$$ w_1(r_2)^2 + w_1(r_3)^2=0, \quad w_1(r_2)^3 + w_1(r_2)^2w_1(r_3) = 0. $$
Again these form a Grobner basis.

The variable $w_4(r_8)$ is polynomial; there is no variable
corresponding to the $q_i$'s in this case.
\end{ex}

\subsubsection{Admissible maps; equivalent maps.}

An {\em admissible map} $f: \w_F(G)\to H^*(G) $ is one which is an
isomorphism in degree $1$ and which turns $H^*(G)$ into a finitely
generated $\w_F(G)$-module.

When $A$ is a graded $\f_2$-algebra, we let $A^{>0}$ denote the ideal of
elements of positive degree. If $f: A\to B$ is any map of graded
algebras, we write $\ideal{f}$ for the ideal of $B$ generated by
$f(A^{>0})$. When $A$ and $B$ are connected, then $f$ is surjective if
and only if $\ideal{f} = B^{>0}$. Also, $B$ is a finitely generated
$A$-module if and only if $B/\ideal{f}$ is finite dimensional over
$\f_2$. 

Two maps $f$ and $g$ from $\w_F(G)$ to $H^*(G)$ are said to be {\em
  equivalent} when they have the same kernel, and when $\ideal{f} =
\ideal{g}$. This defines an equivalence relation on the set of all
maps from $\w_F(G)$ to $H^*(G)$.

\subsection{Construction of certain maps $\w_F(G) \to H^*(G)$.}

The main idea is to construct all maps from $\w_F(G)$ to $H^*(G)$,
then reject those which are not admissible, then reject more maps
using finer criteria, and finally hope that the remaining maps are all
equivalent. However, we cannot {\em quite} follow this programme, for
the computation of {\em all} maps between these two rings would simply
take too much time. Careful precautions will allow us to reduce the
number of computations by many orders of magnitude. Some work will be
needed to prove that we get a correct answer nonetheless.

In this section, $f$ is a homomorphism $\w_F(G)\to H^*(G)$ which we
gradually build by specifying the values $f(t_i)$, then $f(q_i)$, and
then $f(p_i)$, step by step.

\subsubsection{Step 1 : setting the degree $1$ variables.}

We start by listing all the possible values for the various $f(t_i)$,
that is, we list all choices of $f(t_1), f(t_2) , \ldots$ such that
\begin{enumerate}
\item $f$ is an isomorphism in degree $1$,
\item the relations $R_k$ involving the elements $t_i$'s only are
  ``satisfied'', that is, map to $0$ under $f$. 
\end{enumerate}
We do this by simply exhausting all elements in degree $1$ in
$H^*(G)$, though we use the following trick in order to save time in
the sequel. Whenever we have two possible choices $f$ and $f'$, that
is whenever we have $a_1=f(t_1), a_2 = f(t_2), \ldots$ on the one hand
and $b_1= f'(t_1), b_2 = f'(t_2), \ldots$ on the other hand such that
both conditions are satisfied, we compare them thus: we check whether
the map $\alpha:H^*(G)\to H^*(G)$ sending $a_i$ to $b_i$ and all other
variables in $H^*(G)$ to themselves is well-defined. If so, it is an
automorphism of $H^*(G)$ such that $f'= \alpha\circ f $. Clearly in
this case, continuing the process with $f$ or $f'$ is immaterial for
what follows.

So we keep only one map out of the pair $(f, f')$. When this is over,
we have a set of partially defined maps; for each one, we move to the
next step. We keep writing $f$ for a particular choice.

\begin{ex}\label{ex-gp16-6-2}
Resuming example \ref{ex-gp16-6-1}, we have only one possibility for
$f$ after Step 1, in this case, namely:
$$ f(w_1(r_2)) = z + y, \qquad f(w_1(r_3)) = y.  $$
One could have exchanged the roles of $w_1(r_2)$ and $w_1(r_3)$, but
then the automorphism $\alpha$ of $H^*(G)$ which sends $y$ to $y+z$
and all other variables to themselves would bridge the two options. So
we have indeed a single $f$ that we take to the next step.

In passing note that, from the above, we see that there is more
symmetry in choosing $w_1(r_2)$ and $w_1(r_3)$, rather than $y$ and
$z$, as the generators in degree $1$.
\end{ex}

\subsubsection{Step 2 : setting the value of $f(q_i)$.}

We wish to continue in the same fashion, and find all possible values
for $f(q_i)$. Here ``possible values'' means that the remaining
relations $R_k$, not yet considered in step 1, must be
satisfied. Again we proceed by exhaustion, but a simple observation
can save us a spectacular amount of time.

Instead of defining all $f(q_i)$'s and then check whether the
relations are satisfied, we proceed one relation at a time (of
course). Given a relation $R_k$ involving $q_{i_1}, \ldots, q_{i_n}$ as
well as degree $1$ variables, we find all values for $f(q_{i_1}),
\ldots, f(q_{i_n})$ such that $f(R_k) = 0$. Then we move to the next
relation. 

{\em However, the order in which we consider the relations is
  crucial.}  Indeed, suppose that $q_{i_j}$ has degree $d_{i_j}$, and
that $H^*(G)$ is of dimension $c_{i_j}$ in degree $d_{i_j}$. Then if
$c=\sum_j c_{i_j}$, we have $2^c$ possibilities for the values of the
variables $q_{i_j}$. This number $2^c$ we call the {\em weight} of
$R_k$. We start our investigation with the relation of lowest
weight. Then, having made a choice for $f(q_{i_j})$, we recompute the
weights of the other relations, which have decreased because there are
now fewer choices to make. We proceed with the lowest weight relation
remaining, and so on.

Looking for the possibilities in this order rather than a random order
can reduce the computing time from hours to minutes.

Of course it may happen, for a given $f$ resulting from Step 1, that
there is no way of completing Step 2. However, the existence of the
map $a$ guarantees that at least one choice can pass both steps. Let
$f$ be such a map, defined on $\Omega$.

\subsubsection{Step 3 : Setting the value of $f(p_i)$.}

When we come to the polynomial variables, {\em any} value for $f(p_i)$
gives a well defined homomorphism $f$. However, in practical terms,
this means that the number of homomorphisms that we end up with is
simply too large: finishing the algorithm would take far too much
time. Instead we use the following simplification, which slightly
increases the chances of failure of the algorithm but greatly improves
the speed. (Although as we point out later, if one is particularly
interested in a single group $G$ and is willing to wait long enough,
it may be best not to use this trick).

Let $R$ denote the quotient of $H^*(G)$ by the ideal generated by
$f(\Omega^{>0})$ (in the notation above this is $R=H^*(G) / \ideal{f}$
if one keeps in mind that $f$ is only defined on $\Omega$ so far). We
extend the composition $\bar f: \Omega \to H^*(G) \to R$ to a map
$\bar f: \w_F(G) \to R$ by choosing $\bar f(p_i)$ arbitrarily
(but of the right homogeneous degree, of course). The point being that
$R$ is much smaller than $H^*(G)$ and there are relatively few choices
for $\bar f$.

Then we pick an arbitrary lift for $\bar f$, giving finally a map $f:
\w_F(G) \to H^*(G)$. Note that any two lifts $f$ and $f'$ have
$\ideal{f}=\ideal{f'}$. In particular, the finite generation of
$H^*(G)$ as a module over $\w_F(G)$ via the map $f$ does not depend on
the choice of lift.

\begin{ex}\label{ex-gp16-6-3}
We continue with the $f$ of example \ref{ex-gp16-6-2}. The dimension
of $H^4(G)$ is $3$, with a basis given by $y^4$, $yx$ and $w$ for
example. So we have $2^3=8$ choices for $f(w_4(r_8))$. However, the
ring $R$ is generated by (the images of) $x$ and $w$, so that it is
$1$-dimensional in degree $4$ (with $w$ the only nonzero element in
this degree), which leaves only $2$ choices for $\bar f$. We end up
with two possibilities for $f$: we may either send $w_4(r_8)$ to $0$,
or to $w$ (in either case, any other lift would do, but we keep only
one).

This is a toy example of course, and the computer could well exhaust
all $8$ possibilities. However, we point out that dividing the number
of subsequent computations by 4 is quite satisfactory, and such
reductions become inevitable if one wishes to deal with bigger groups
(the sensitivity being exponential).
\end{ex}

\subsection{Tests \& Conclusions}

We have now a certain finite set $S$ of maps $\w_F(G)\to H^*(G)$. We
shall now run a series of tests on the maps in $S$, leading either to
definite conclusions regarding the map $a$, or to the decision to give
up on the computation.

At this point it is not clear whether $a\in S$, or even whether $a$ is
equivalent to a map in $S$. However, there is certainly a map $f$ in
$S$ which agrees with $a$ on $\Omega$ (perhaps with the degree $1$
variables in $H^*(G)$ relabelled, cf Step 1), and with $\ideal{a} =
\ideal{f}$. 

\subsubsection{Test 1 : finite generation of $H^*(G) $.}
We reject all the maps in $S$ which are not admissible, ie those which
do not turn $H^*(G)$ into a finitely generated $\w_F(G)$-module. There
remains a smaller set $S'$. The last remark shows that $S'$ is not
empty.

\begin{ex}\label{ex-gp16-6-4}
We continue from example \ref{ex-gp16-6-3}. Out of our two maps in
$S$, only one is admissible, namely that with $f(w_4(r_8)) = w$. Thus
we keep only this $f$.
\end{ex}

\subsubsection{Test 2 : polynomial variables.}
Let $f\in S'$. We check which of the given generators for $H^*(G)$
are non zero in $H^*(G) / \ideal{f}$; for simplicity, say these are
numbered $g_1, \ldots, g_m$. We adjoin polynomial variables with the
same name to $\w_F(G)$, thus obtaining
$$ \w_F(G)^+ = \w_F(G)[g_1, \ldots, g_m].  $$
The map $f$ has an obvious extension to $\w_F(G)^+$ which we call
$f^+$; it is surjective.

We then compute the reduced Grobner basis of $\ker(f^+)$. If the
polynomial variables $p_i$ show up in this Grobner basis, that is if
the $p_i$'s are not polynomial anymore in $\w_F(G)^+ / \ker(f^+)$ with
respect to the obvious presentation, {\em we give up on the
  computation altogether}. Otherwise, if all maps in $S'$ pass this
test, we move to Test 3 with $S'$ unchanged.

We shall give below a heuristic explanation according to which it is
reasonable to expect that Test 2 is often completed succesfully. Our
interest in this test comes from the following lemma.

\begin{lem}
Let $f\in S'$ satisfy the test above, and let $g$ be a map $\w_F(G)
\to H^*(G)$ obtained by a different choice of lift in Step 3. Then $f$
and $g$ are equivalent. Also, $f^+$ and $g^+$ are equivalent.
\end{lem}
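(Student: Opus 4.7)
The strategy is to exhibit an automorphism $\phi$ of $H^*(G)$ satisfying $g=\phi\circ f$ and $g^+=\phi\circ f^+$, from which $\ker f=\ker g$ and $\ker f^+=\ker g^+$ follow at once. Set $h_i:=f(p_i)+g(p_i)$; since $f$ and $g$ are two lifts of the same $\bar f$ from Step~3, they agree on $\Omega$ and $h_i\in\ideal{f|_\Omega}$. The image-ideal parts of the equivalence claims require no further work: $\ideal{f}=\ideal{g}$ is the observation recorded at the end of the construction of Step~3, while $\ideal{f^+}=\ideal{g^+}=H^*(G)^{>0}$ by surjectivity of $f^+$ and $g^+$ (note that the set $\{g_1,\ldots,g_m\}$ depends only on $\ideal{f}$, so $g^+$ is defined and surjective).

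Next I invoke Test~2. By hypothesis the $p_i$'s do not appear in the reduced Gr\"obner basis of $\widetilde{\ker f^+}$, computed in the polynomial ring $\f_2[\bar w_j(r_i),g_1,\ldots,g_m]$. By the general discussion of polynomial variables at the start of \S3.1, this furnishes a canonical isomorphism
$$H^*(G)\;\cong\;S[p_1,\ldots,p_k],\qquad S:=H^*(G)/(f(p_1),\ldots,f(p_k)),$$
under which $f(p_i)$ becomes the polynomial indeterminate $p_i$, while both $f(\Omega)$ and each $g_\ell\in H^*(G)$ land in the subring $S\subset S[p]$ of constants.

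Now I analyze $h_i$. Expand $h_i=\sum_\alpha c_\alpha p^\alpha$ with $c_\alpha\in S$. The ideal $\ideal{f|_\Omega}$ of $S[p]$ is exactly $J[p]$, where $J:=f(\Omega^{>0})\cdot S$ is contained in $S^{>0}$, so every coefficient $c_\alpha$ sits in $S^{>0}$. Homogeneity places $c_\alpha$ in $S^{|p_i|-|\alpha|}$, with $|\alpha|:=\sum\alpha_j|p_j|$; combined with $|c_\alpha|>0$ this forces $|\alpha|<|p_i|$ whenever $c_\alpha\ne 0$. After sorting so that $|p_1|\le\cdots\le|p_k|$, I conclude that each $h_i$ involves only those $p_j$ with $|p_j|<|p_i|$ --- a strictly lower-triangular dependence.

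Finally I define the $S$-algebra endomorphism $\phi$ of $S[p]$ by $\phi(p_i)=p_i+h_i$. The triangular form above makes $\phi$ a graded automorphism: its inverse is constructed recursively, working upward through the $p_i$'s by degree. Checking on generators, $\phi$ fixes $S\supseteq f(\Omega)\cup\{g_1,\ldots,g_m\}$, while $\phi(f(p_i))=p_i+h_i=g(p_i)$; this gives both $g=\phi\circ f$ and $g^+=\phi\circ f^+$, and hence $\ker f=\ker g$ and $\ker f^+=\ker g^+$, completing the proof. The delicate step I anticipate is the third paragraph --- the weight argument producing the triangular form of the $h_i$'s --- while the rest is essentially formal once Test~2 has been brought to bear.
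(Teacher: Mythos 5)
Your proof is correct, but it runs the argument on the opposite side of the map from the paper. The paper builds an automorphism $\alpha$ of the \emph{source} $\w_F(G)^+=\Omega^+[p_1,p_2,\ldots]$ with $f^+=g^+\circ\alpha$, sending $p_i\mapsto p_i+\omega_i$ with $\omega_i\in\Omega^+[p_j:|p_j|<|p_i|]$; this $\alpha$ exists unconditionally, because the source is always polynomial over $\Omega^+$, and Test~2 is invoked only at the end, to see that the reduced Gr\"obner basis of $\ker f^+$ (hence, by an elimination argument, of $\ker f$) avoids the $p_i$'s and is therefore fixed by $\alpha$; one then still has to reverse the roles of $f$ and $g$ to get the opposite inclusion of kernels. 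You instead build an automorphism $\phi$ of the \emph{target} $H^*(G)$ with $g=\phi\circ f$ and $g^+=\phi\circ f^+$. The price is that you must invoke Test~2 at the outset: the polynomial structure $H^*(G)\cong S[f(p_1),\ldots,f(p_k)]$, which is exactly what Test~2 certifies, is needed both to make $\phi$ well defined and to give the unique coefficient expansion underlying your triangularity argument. The payoff is that once $\phi$ exists, $\ker f=\ker g$ and $\ker f^+=\ker g^+$ are immediate, with no Gr\"obner-basis or elimination-order discussion and no symmetry step. The key computation is the same in both proofs: the differences $f(p_i)+g(p_i)$ lie in the ideal generated by $f(\Omega^{>0})$ and, by a degree count, involve only polynomial variables of strictly smaller degree. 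Both versions are complete; yours is arguably the cleaner deduction of the kernel equalities, while the paper's makes more transparent exactly where Test~2 can fail.
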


\begin{proof}
We have pointed out that $\ideal{f}= \ideal{g}$ always, so we need to
show that $\ker f= \ker g$. 

Assume that the $p_i$'s are ordered by degree, that is assume that
$|p_1| \le |p_2| \le \cdots$. Write $\Omega^+ = \Omega [g_1, \ldots, g_m]$
so that $\w_F(G)^+ = \Omega ^+[p_1, p_2, \ldots ]$. Note that $f^+$
and $g^+$ are both defined on this ring $\w_F(G)^+$, and are both
surjective. We define an automorphism $\alpha$ of $\w_F(G)^+$ such
that $f^+ = g^+\circ \alpha$.

Indeed, since $g^+$ is surjective, and from the definitions, we see
that
$$ f^+(p_i) = f(p_i) = g(p_i) + g^+(\omega_i) = g^+(p_i + \omega_i)   $$
for some $\omega_i \in \Omega^+[p_j : |p_j| < |p_i| ]$. So we may define
$\alpha$ by requiring it to be the identity on $\Omega^+$, and to send
$p_i$ to $p_i + \omega_i$. In order to see that $\alpha$ is an
isomorphism, one may for example show by induction on $i$ that
$\omega_i$, and thus $p_i$, is in the image of $\alpha$; therefore
$\alpha$ is surjective and is an isomorphism as a result.

It is now easy to conclude. Let $b_1, b_2, \ldots $ be the reduced
Grobner basis for $\ker(f^+)$. By choosing the term order carefully,
we can arrange things so that the $b_i$'s not involving the variables
$g_1, \ldots, g_m$ constitute the reduced Grobner basis for $\ker f$;
say these are $b_1, \ldots, b_r$. Now, since $f$, or rather $f^+$,
passes Test 2, then the elements $b_1, \ldots, b_r$ do not involve the
$p_i$'s, either. It follows that $\alpha(b_i) = b_i$ and that $\ker f
\subset \ker g$.

From the relation $g^+ = f^+ \circ \alpha^{-1}$, it is clear that $g$
passes Test 2 as well, so we may reverse the roles and obtain $\ker g
\subset \ker f$.

Proving that $f^+$ and $g^+$ are equivalent is a similar, but easier,
matter. 
\end{proof}

Assuming that all maps in $S'$ have passed the test, we can move on to
Test 3 knowing that $a$ is equivalent to some map in $S'$.

\begin{ex}\label{ex-gp16-6-5}
We continue from example \ref{ex-gp16-6-4}. There is only one $f$ to
deal with. In this case $H^*(G)/\ideal{f}$ is generated by $x$ only
as an algebra, so we adjoin a variable $x$ to $\w_F(G)$, obtaining
$\w_F(G)^+$ which is generated by $w_1(r_2)$, $w_1(r_3)$, $w_4(r_8)$
and $x$. We extend $f$ to this ring by setting $f^+(x)=x$.

A Grobner basis for $\ker (f^+)$ is then
$$ w_1(r_2)^2 + w_1(r_3)^2, \qquad w_1(r_2)^2w_1(r_3) + w_1(r_2)w_1(r_3)^2,$$
$$ w_1(r_2)x + w_1(r_3)x, \qquad x^2.  $$
These do not involve $w_4(r_8)$. Test 2 is successful.

Note that, since we have only one map in $S'$, there is no need to
perform Test 3 and Test 4, which we describe now in the general case.
\end{ex}

\subsubsection{Test 3 : Steenrod operations.}
We reject all maps in $S'$ whose kernel is not stable under the
Steenrod operations. There remains a smaller set $S''$. Since $a$ is a
map of unstable algebras, $S''$ is not empty.

\subsubsection{Test 4 : restrictions to elementary abelian subgroups.}
When $E$ is an elementary abelian $2$-group, then $H^*(E)$ is
completely understood, including Stiefel-Whitney classes. One way to
state this is to say that $a_E : \w_F(E)\to H^*(E)$ is an isomorphism,
and that (as always) the map $A^*_E \to \w_F(E)$ is explicitly
described. 

We exploit this to setup our final test. The map $a:\w_F(G) \to
H^*(G)$ can be composed with the restriction $H^*(G) \to H^*(E)$ for
any elementry abelian subgroup $E$ of $G$, thus giving a map $\w_F(G)
\to H^*(E)$ which we understand fully: to determine the image of
$w_j(r_i)$, decompose $r_i$ as a sum of irreducible, real
representations of $E$, and compute the Stiefel-Whitney of this sum
using the usual formula; then use the map $A^*_E \to H^*(E)$ to
express the result in terms of your favorite choice of generators for
$H^*(E)$.

Our test is the following. If the generators for $\ker f$ do not map
to $0$ under the restriction maps $\w_F(G) \to H^*(E)$, for $E$
running among the maximal elementary abelian subgroups, then we reject
$f$ from $S''$. We obtain in this way a smaller, nonempty set $S'''$.

\subsubsection{Conclusion.}
If the maps in $S'''$ are not all equivalent, the computation has
failed. If they are, we compute for each $f\in S'''$ the map $f^+$ has
above; note that all these are defined on the same ring
$\w_F(G)^+$. If the maps $f^+$ are not all equivalent, the
computation has failed. Otherwise, we claim that we have succeeded, in
a sense which we make precise now.

Pick an $f$ in $S'''$. Then $f$ is equivalent to $a$. Moreover $f^+$
and $a^+$ are defined on the same ring $\w_F(G)^+$, are both
surjective, and are equivalent. Thus, we know the kernel of the
surjective map
$$ a^+ : \w_F(G)[g_1, \ldots, g_m] \to H^*(G).  $$
This is the desired presentation of $H^*(G)$.

\begin{ex}
We conclude example \ref{ex-gp16-6-5}, and the proof of proposition
\ref{prop-gp16-6} at the same time. There being only one candidate in
$S'$, Test 3, Test 4 and the final check are all redundant. Note that
the map $f$ is not necessarily equal to $a$: in Step 1 we had two
choices, and in Step 3 we had four, so we can write down 8 maps from
$\w_F(G)$ to $H^*(G)$, one of which will be $a$. However, these are
all equivalent, and their extensions to $\w_F(G)^+$ are also all
equivalent. In the end, we know the kernel of $a^+$, as it was given
in the previous example. Thus proposition \ref{prop-gp16-6} holds (the
information on Steenrod operations follows from Wu's formula).
\end{ex}

\subsection{Comments}

All of the comments below will have something to do with the trick
used in Step 3 and its validation in Test 2.

\subsubsection{A variant.}
There is an evident variation that we may want to try: namely, in Step
3, drop the ring $R$ and the choice of lifts altogether, and simply
gather all possible homomorphisms by listing all possible values for
the polynomial variables. Then Test 2 becomes irrelevant.

As already pointed out, this will often lead to a number of elements
in the set $S$ which is impossible to manage: each homomorphism in $S$
will need to have its kernel computed, and this uses Buchberger's
algorithm for Grobner basis, a time-consuming process of exponential
complexity. However, in very particular cases, it may still be best to
go down this road anyway.

\begin{ex}
Consider the group number $12$ (in the GAP library) of order $64$; it can be described as $(\z/4\rtimes \z/8) \rtimes \z/2$. The algorithm above produces about $60$ homomorphisms
after Step 3, and Test 2 fails. However, the variant algorithm
produces 384 homomorphisms, which are all surjective and fall into $9$
equivalence classes. They all pass Test 3. Fortunately only one of
them passes Test 4, and the computation is complete.

Similarly, we may look at the group number $87$ of order $64$, a group of type $\z/2 \times ((\z/8\times \z/2)\rtimes \z/2 )$. The
normal algorithm yields about 800 homomorphisms, and Test 2 fails. It
is still possible to use the variant, even though there are now 24,576
homomorphisms to deal with. They are all surjective, fall into $5$
equivalence classes, only one of which passes Test 3. The computation
is complete, and takes about 30 minutes on an average
computer. Clearly, we cannot let the complexity gain an extra order of
magnitude.
\end{ex}

\subsubsection{The success of Test 2.}
There are above 100 groups for which our computations are successful;
only 4 of them have required the lengthy alternative algorithm. On the
other hand, in the vast majority of cases, when the computation fails
it does so for reasons other than Test 2. This means that the test is
often passed, and indeed it was our hope that it should be so.

A loose explanation is as follows. The ring $\w_F(G)$ is sufficiently
fine that the kernel of $a$ is relatively small; in particular, if a
variable is polynomial in $\w_F(G)$, it is unlikely that its image
should not be polynomial in $H^*(G)$. So the map $a$ itself should
pass Test 2. Now, this tells us something about the size of $H^*(G)$
relative to that of $\w_F(G)$, and if another homomorphism $f: \w_F(G)
\to H^*(G)$ were to fail Test 2, that is, were to have a polynomial
variable showing up in its kernel, it is likely to have an image which
is too small, and thus Test 1 will reject it. Otherwise,  $f$
would have to have a much bigger image on $\Omega$ than $a$ does,
which again is unlikely.

The examples above show that ``unlikely'' does not mean
``impossible''. We also note that a refinement of $\w_F(G)$, which one
could obtain by thinking of more relations to throw in, would increase
the chances of our algorithm.

\subsubsection{The order of the tests.}
It is tempting to run the straightforward Test 3 and Test 4 first, and
thus have a smaller set of homomorphisms on which to try the more
dubious Test 2. However, this cannot be done. Indeed, a map could well
fail Test 3, say, whereas another choice of lift in Step 3 would give
a map that passes it.

\section{Experimental results}\label{sec-results}

We shall first comment on the practicalities on the computations, and
then on the mathematics.

\subsection{Practicalities}

\subsubsection{The programs.}
The first task is to gather information on the characters of the group
$G$, and on the sizes of the conjugacy classes. From this, one can
compute scalar products between characters, and thus express tensor
products and exterior powers in terms of the irreducible
representations. One also finds out what the real characters are. All
this is done with the help of the GAP computer package.

The bulk of our project, comprising more than 99\% of the code, is a
C++ program which computes a presentation for $\w_F(G)$ and then goes
through the algorithm just presented. There are about $18,000$
lines of C++ code in standard presentation, to which one must add
about $5,000$ lines of comments (by comparison, the \LaTeX ~source for
the present article has just above $2,000$ lines).

It is also necessary to get the information on $H^*(G)$ from Carlson's
webpage, which is presented there as a Magma file. In order to
download all the necessary files automatically and translate them into
C++, we have used the Python programming language. Incidentally,
Python was also used to produce the various HTML files containing the
results.

It has been very convenient to use the SAGE computer package, which
allows the smooth blending of GAP, Python, and C++.

\subsubsection{The computing time.}
All computations were performed on the {\tt irmasrv3} server at
the university of Strasbourg. This machine has 12 CPUs, which was
extremely handy to run the various calculations in parallel. Each CPU
though has the power of a standard, personal machine.

The preliminaries, before the algorithm of section \ref{sec-algo}
starts, take little time. It may happen that the computation of
universal polynomials, used in the formulae for tensor products for
example as in section \ref{sec-formal}, take several minutes.

The main algorithm can in many cases be completed in a few seconds;
sometimes it can take above 20 minutes (group 87 of order 64); or it
can take several hours (for example for $Q_8\times (\z/2)^3$, for
which it is of course preferable to use the Kunneth formula).

Also, occasionally, the algorithm seems to take so long that we
have interrupted it and given up on the computation. The reader may be
surprised to learn that it is mostly innocent-looking Step 2 which is
particularly time-consuming. {\em This is in fact the most common
  cause of failure of the algorithm}, much more frequently encountered
than a failure after Test 2 or at the very end when there are more
than one equivalence class.

\subsection{Mathematical results}

\subsubsection{Success.} 
As announced in the introduction, we have focused on the groups of
order dividing $64$. The computation was successful for the 5 groups
of order 8, for 13 of the 14 groups of order 16, for 28 of the 51
groups of order 32, and for 61 of the 267 groups of order 64 (a total
of 107 groups).

Note that the method is not well-behaved with respect to products:
even if we can successfully run the computation for both $G$ and $H$,
it may still fail for $G\times H$ (because the complexity explodes).

\subsubsection{Cohomology rings generated by Stiefel-Whitney classes.}
Among our succesful computations, only 13 groups have been found to
have a cohomology which is {\em not} generated by Stiefel-Whitney
classes. Of course one may argue that the algorithm is more likely to
terminate without incident when the cohomology is generated by such
classes (and there are no maps $f^+$ to consider at all). However we
have pointed out that the main cause of failure is the excessive time
needed by the calculations, and so we find it reasonable to conclude
that ``most'' groups have $\w(G) = H^*(G)$. 


\subsubsection{A curiosity: groups with isomorphic cohomologies.} Let
$G$ be the group of order 32 whose Hall-Senior number is 21; its
number in the GAP library is 12, and it can be described as a
semidirect product $\z/4 \rtimes \z/8$. On the other hand, let $G'$ be
the group of order 32 whose Hall-Senior number is 29; its number in
GAP is 14 and it is also a semidirect product, this time $\z/8 \rtimes
\z/4$. 

Then $H^*(G)$ and $H^*(G')$ are isomorphic rings. What is more, our
computations show that {\em they are isomorphic as unstable algebras},
that is, there is an isomorphism between them which commutes with the
Steenrod operations.

This implies classically (\cite{lannes}, Prop. 3.1.5.2) that, for any elementary abelian $2$-group $E$, there exists a bijection $Rep(E, G) = Rep(E, G')$ (the set $Rep(A,B)$ consists of all group homomorphisms from $A$ into $B$ up to conjugacies in $B$).

\section{Application to algebraic cycles}\label{sec-chow}

\subsection{Algebraic cycles in the cohomology}

\subsubsection{The Chow ring.}

For any algebraic group $G$ over $\c$, for example a finite group, the
classifying space $BG$ can be approximated by algebraic varieties, in
such a way that there is a well-defined {\em Chow ring} $CH^*BG$. As
the notation suggests, everything works as if $BG$ were an algebraic
variety itself, and $CH^*BG$ is to be thought of as generated by the
subvarieties of $BG$. For details see \cite{totaro}.

There is a {\em cycle map}
$$ cl: CH^*BG \to H^{2*}(BG, \z),$$
whose image we denote by $\ch(G)$. Cohomology classes in $\ch(G)$ are
usually said to be supported by algebraic varieties. Our aim is to
compute $\ch(G)$ for as many groups $G$ as possible, using our results
on Stiefel-Whitney classes. More precisely, we will obtain information
on the composition
$$ CH^* BG \otimes_\z \f_2 \to H^{2*}(BG, \z)\otimes_\z \f_2 \to
H^{2*}(BG, \f_2).  $$
This map we still denote by $cl$, and its image by $\ch(G)$. Also,
$CH^*BG$ will stand for $CH^*BG \otimes_\z \f_2$ from now on, unless
we repeat the reduction mod 2 for emphasis. Recall that we write
$H^*(G)$ for the mod $2$ cohomology of $G$.

\subsubsection{A lower bound.}
If $V$ is a complex representation of $G$, then it has Chern classes
$c_i(V) \in H^*(G)$, which are pulled-back from $H^*(BGL_n(\c),
\f_2)$. However, it turns out that the cycle map $cl$ is an
isomorphism for $GL_n(\c)$ (see \cite{totaro}), so we have an
identification $ H^*(BGL_n(\c), \f_2) = CH^*BGL_n(\c)$.

What is more, the cycle map $cl$ is natural in $G$. It follows that
the Chern classes $c_i(V)$ ``come from the Chow ring'', ie are in the
image of the cycle map for $G$. In symbols,
$$ \C(G) \subset \ch(G).  $$

\subsubsection{An upper bound.}
The Steenrod algebra acts on the ring $CH^*BG\otimes_\z \f_2$, and the
cycle map commutes with the operations $Sq^k$: for this see
\cite{brosnan}. However $Sq^1$ acts trivially. Note that $CH^*BG$ is
often seen as a graded ring concentrated in even degrees, with $CH^nBG$
in degree $2n$; with this convention, $Sq^k$ raises the degrees by
$k$, so if $k$ is odd then $Sq^k$ must be zero on the Chow ring indeed.

Let $\ideal{Sq^1}$ be the two-sided ideal generated by $Sq^1$ in the
mod 2 Steenrod algebra. We see that $\ideal{Sq^1}$ acts as $0$ on the
Chow ring of any variety; as a result, {\em any class in $\ch(G)$ is
  killed by $\ideal{Sq^1}$.}

It is traditional to write $\tob H^*(G)$ for the subring of $H^*(G)$
of all those even-degree classes which are killed by
$\ideal{Sq^1}$. This is the largest unstable submodule of $H^*(G)$
which is concentrated in even degrees. With this notation one has: 
$$ \ch(G) \subset \tob H^*(G) .  $$

\subsection{Computations}

Our strategy is pretty simple-minded: we shall compute $\C(G)$ and
$\tob H^*(G)$, and hope that they coincide. In such cases (which are
quite common, as we shall see), these two subrings also coincide with
$\ch(G)$. 

The ring $\C(G)$ is trivial to describe for those groups $G$ for which
our previous computations were successful: indeed we have explained in
\S \ref{sec-chern} how to express Chern classes in terms of
Stiefel-Whitney classes, and we have a full understanding of the map
$A^*_G \to \w(G)$.

As for $\tob H^*(G)$, we need a couple of results before we start.

\subsubsection{Milnor derivations.}

Define $Q_0 = Sq^1$ and
$$ Q_{n+1} = Sq^{2^{n+1}}Q_n + Q_n Sq^{2^{n+1}}.  $$
Then each $Q_i$ acts as a derivation on any unstable algebra, and is
called the $i$-th {\em Milnor derivation} (see
\cite{milnor-der}). They all commute with each other.

\begin{lem}\label{lem-tob-milnor-der}
If $A$ is an unstable algebra and $x\in A$ has even degree, then $x$
belongs to $\tob A$ if and only if $Q_i(x) = 0$ for all $i\ge 0$.
\end{lem}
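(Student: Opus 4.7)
The plan is to treat the two implications separately, exploiting the fact that the annihilator $\mathrm{Ann}(x) = \{\alpha \in \mathcal{A}_2 : \alpha x = 0\}$ is a \emph{left} ideal of $\mathcal{A}_2$.

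For the easy direction ($\Rightarrow$), I would show by induction on $i$ that each Milnor derivation lies in the two-sided ideal: $Q_0 = Sq^1 \in \ideal{Sq^1}$ trivially, and the defining recursion $Q_{i+1} = Sq^{2^{i+1}} Q_i + Q_i Sq^{2^{i+1}}$ keeps us inside $\ideal{Sq^1}$ the moment $Q_i$ does. Hence $x \in \tob A$ forces $\ideal{Sq^1} \cdot x = 0$, and in particular $Q_i(x) = 0$ for every $i$.

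For the converse, the hypothesis $Q_i(x) = 0$ for all $i$ gives the inclusion $L \subseteq \mathrm{Ann}(x)$, where
\[
L \;:=\; \sum_{i \ge 0} \mathcal{A}_2 \cdot Q_i
\]
is the left ideal generated by the Milnor primitives. Combined with the even-degree hypothesis, it therefore suffices to prove the structural identity
\[
\ideal{Sq^1} \;=\; L \qquad (\star)
\]
of ideals in $\mathcal{A}_2$. The inclusion $L \subseteq \ideal{Sq^1}$ comes for free from the first part. For the reverse, my strategy is to show that $L$ is in fact \emph{two-sided}; since it contains $Sq^1 = Q_0$, it will then contain the smallest two-sided ideal with that property, namely $\ideal{Sq^1}$.

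The hard part is precisely this two-sidedness: one needs $Q_i \cdot \alpha \in L$ for every $\alpha \in \mathcal{A}_2$. Writing $Q_i \alpha = \alpha Q_i + [Q_i,\alpha]$, the summand $\alpha Q_i$ is tautologically in $L$. Because each $Q_i$ is a primitive element of the Hopf algebra $\mathcal{A}_2$, the adjoint $[Q_i,-]$ is a derivation of $\mathcal{A}_2$, so it suffices to verify $[Q_i, Sq^n] \in L$ on the Steenrod square generators. Concretely this can be carried out with Adem relations and an induction on $n$; more cleanly, it is visible in the Milnor basis, where $\ideal{Sq^1}$ is exactly the span of those basis elements $Sq(r_1,r_2,\ldots)$ with at least one odd $r_k$, and Milnor's product formula writes each such element as an explicit left $\mathcal{A}_2$-multiple of $Q_{k-1}$. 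The identity $(\star)$ is a standard result in the structure theory of the Steenrod algebra, essentially due to Milnor, which I would cite rather than reprove in detail.
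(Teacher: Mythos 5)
Your argument is correct, and it ultimately rests on exactly the same computational input as the paper's proof, namely Milnor's commutation formula $Q_k Sq^r=\sum_{i\ge 0}Sq^{r-2^k(2^{i+1}-1)}Q_{k+i}$; the difference is where the key step is packaged. The paper works inside the module $A$: it shows the joint kernel $A'$ of all the $Q_i$ is stable under the Steenrod operations (immediate from the formula), whence, since $Q_0=Sq^1$ kills $A'$, every $\alpha\, Sq^1\beta$ kills $A'$ as well, i.e.\ all of $\ideal{Sq^1}$ does. You work inside $\mathcal{A}_2$ itself and prove the ideal-theoretic identity $\ideal{Sq^1}=L:=\sum_i\mathcal{A}_2Q_i$, then apply it to the left ideal $\mathrm{Ann}(x)$. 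The statements ``$L$ is two-sided'' and ``the joint kernel of the $Q_i$ is an unstable submodule'' are essentially dual and are proved by the same manipulation; your version is the more structural one (a fact about $\mathcal{A}_2$, usable for every module at once, and indeed standard --- it is the assertion $\mathcal{A}_2\bar{E}=\bar{E}\mathcal{A}_2$ for the exterior subalgebra $E$ on the Milnor primitives), while the paper's avoids invoking any structure theory of $\mathcal{A}_2$ and just checks stability of the kernel directly. One presentational caveat in your reduction to generators: since $L$ is a priori only a \emph{left} ideal, the Leibniz expansion $[Q_i,\alpha\beta]=[Q_i,\alpha]\beta+\alpha[Q_i,\beta]$ produces the term $[Q_i,\alpha]\beta$, a right multiple of an element of $L$, so ``$\mathrm{ad}(Q_i)$ is a derivation'' does not by itself reduce the two-sidedness to generators. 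The clean argument is an induction on word length: once $Q_iSq^n\in L$ is known for all $i,n$, write $Q_iSq^{n_1}\cdots Sq^{n_k}=\bigl(\sum_j\gamma_jQ_{m_j}\bigr)Sq^{n_2}\cdots Sq^{n_k}$ and conclude using the inductive hypothesis and the left-ideal property. This is a wrinkle rather than a gap, especially as you defer to the standard (Milnor) result for the identity itself.
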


\begin{proof}
In fact, let $A'$ denote the algebra of all elements (of even degree
or not) killed by each $Q_i$, and let $A''$ denote the algebra of all
elements (again, of arbitrary degree) killed by $\ideal{Sq^1}$. We
prove that $A'=A''$.

Since the Milnor derivations are clearly in $\ideal{Sq^1}$, we
certainly have $A''\subset A'$. On the other hand, $Q_0= Sq^1$, so it
suffices to show that $A'$ is stable under the Steenrod operations to
get the inclusion $A'\subset A''$.

This follows from \cite{milnor-der}, theorem 4a, from which we extract
just one formula:
$$ Q_k Sq^r = \sum_{i=0}^\infty Sq^{r - 2^k (2^{i+1}-1)}Q_{k+i}. $$
This is really a finite sum, with the convention that $Sq^a=0$ when
$a<0$. Clearly this proves the claim.
\end{proof}

\subsubsection{The kernel of a derivation.}
If $A$ is an algebra over a field $k$, and $$d: A\to A$$ is a
derivation, how are we to compute generators for the algebra $\ker d$
? Here is the simple method which we have used. 

We assume that we have
a subalgebra $B$ of $A$ such that:
\begin{itemize}
\item $d$ vanishes on $B$. Thus $d$ is $B$-linear when $A$ is viewed as
  a $B$-module.
\item there is a presentation $r_A: \tilde A \to A$, resp $r_B: \tilde
  B \to B$, where $\tilde A$, resp $\tilde B$, is a polynomial
  ring. These are compatible in the sense that there is a commutative
  diagram
$$ \begin{CD}
\tilde B     @>>>     \tilde A     \\
@V{r_B}VV              @VV{r_A}V   \\
     B       @>>>          A
\end{CD}
  $$
where the horizontal maps are inclusions.
\item $\tilde A$ is a free $\tilde  B$-module of finite rank $n$.
\end{itemize}

In the case at hand, namely finitely generated algebras over $\f_2$,
it is easy to find such a $B$, mostly because $d$ {\em vanishes on
  squares}. Assume that $A$ is presented as a quotient of $\tilde A=
\f_2[X_i]$, and write $x_i= r_A(X_i)$. If $d(x_i)=0$, put
$Y_i=X_i$; if not, put $Y_i= X_i^2$. The algebra $\tilde B= \f_2[Y_i]$
and its quotient $B= \tilde B / \ker r_A$ together satisfy the
properties given. (Note that we could simply take $Y_i=X_i^2$ for all
$i$, but this increases the rank $n$, which is not desirable in practice.)

Let us introduce some notations. We let $\tilde \varepsilon_1, \ldots,
\tilde \varepsilon_n$ be generators for $\tilde A$ as a $\tilde
B$-module. Using these we can and we will identify $\tilde A$ and
$\tilde B^n$. Put $\varepsilon_i= r_A(\tilde \varepsilon_i)$. We write
$p: \tilde B^n \to A$ for the map of $\tilde B$-modules underlying
$r_A$, and we let $\sigma_1, \ldots, \sigma_k$ be generators for $\ker
p$. (When we know generators $f_1, f_2, \ldots$ for $\ker r_A$ {\em as
  an ideal}, then the collection of all elements $\tilde \varepsilon_i
f_j$ provides a choice of such generators for $\ker p$).

We then pick a lift $\tilde d$ of $d$:
$$ \begin{CD}
\tilde B^n     @>{\tilde d}>>     \tilde B^n     \\
@V{p}VV                           @VV{p}V   \\
     A          @>{d}>>               A
\end{CD}
  $$
Let $d_i= \tilde d (\tilde \varepsilon_i) \in \tilde B^n$. Now if $x=
\sum b_i \varepsilon_i \in A$, with each $b_i \in B$, then $x$ belongs
to $\ker d$ if and only if $\sum \tilde b_i d_i \in \ker p$, where
$r_B(\tilde b_i) = b_i$. In other words this happens if and only if
there exist elements $c_i \in \tilde B$ such that
$$ \sum_{i=1}^n \tilde b_i d_i + \sum_{i=1}^k c_i \sigma_i = 0. $$
Or, to say this yet differently, the element $(\tilde b_1, \ldots,
\tilde b_n, c_1, \ldots, c_k)$ of $\tilde B^{n+k}$ belongs to the {\em
  syzygy module} of the elements $d_1, \ldots, d_n, \sigma_1, \ldots,
\sigma_k$, which all live in $\tilde B^n$. 

Now, computing generators for the syzygy module of a collection of
elements in a free module over a polynomial ring is standard
computational algebra\footnote{as is, probably, the computation of the
  kernel of a derivation. In this paragraph our goal is to justify and
  explain our own method, in particular to the benefit of readers of
  the source code. We point out that feeding our results on Steenrod operations to the appropriate sofware would require a considerable amount of work anyway; and more seriously, as we proceed recursively with $d$ playing the role of each $Q_i$ one after the other, we have been able to include a number of optimizing tricks, saving work between one computation and the next.}, see \cite{adams}. Having computing generators, we
only keep their first $n$ coordinates (ie we keep the $\tilde b_i$'s
and drop the $c_i$'s). Applying $p$ yields generators for $\ker d$.

\subsubsection{Computing in finite time.}
We are now prepared to compute $\ker Q_i$ for each $i$. The algebra
$\tob H^*(G)$ is the intersection of all those. The intersection of
the first $N+1$ kernels is easy to obtain, for it is the kernel of
$Q_N$ viewed as a derivation on the algebra recursively computed as
the intersection of the first $N$ kernels. (Recall that the Milnor
derivations commute).

However, we would like to compute only a finite number of kernels. The
next lemma follows from \ref{lem-tob-milnor-der}.

\begin{lem}
If the integer $N$ is such that the even part of
$$ \bigcap_{i=0}^N \ker Q_i  $$
is stable under the Steenrod operations, then this even part is
$\tob H^*(G)$.
\end{lem}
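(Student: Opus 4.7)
The plan is to show both inclusions between the even part $E := \bigl(\bigcap_{i=0}^N \ker Q_i\bigr)^{ev}$ and $\tob H^*(G)$, using the recursive definition of the Milnor derivations to propagate the vanishing condition from $Q_0,\ldots,Q_N$ to all $Q_j$.

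First, the easy inclusion $\tob H^*(G) \subseteq E$: by Lemma~\ref{lem-tob-milnor-der}, every element of $\tob H^*(G)$ is killed by every $Q_i$ and is of even degree, so it automatically lies in $E$. (Alternatively, each $Q_i$ lies in $\ideal{Sq^1}$, which is immediate from the recursive definition and the fact that $Q_0 = Sq^1$.)

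For the nontrivial inclusion $E \subseteq \tob H^*(G)$, the key move is an induction on $j$ showing that $Q_j$ annihilates all of $E$, not merely for $j \leq N$. Given $y \in E$ and $j > N$, apply the defining relation
\[
Q_j(y) \;=\; Sq^{2^{j}}\bigl(Q_{j-1}(y)\bigr) \;+\; Q_{j-1}\bigl(Sq^{2^{j}}(y)\bigr).
\]
The first summand vanishes by the inductive hypothesis applied to $y$. For the second summand, observe that $Sq^{2^{j}}(y)$ is again of even degree (since $2^j$ is even for $j\geq 1$), and by the hypothesis that $E$ is stable under the Steenrod operations, $Sq^{2^{j}}(y)$ lies in $E$; hence the inductive hypothesis applied to this new element of $E$ yields $Q_{j-1}(Sq^{2^{j}}(y)) = 0$. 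This completes the induction, so every $y \in E$ satisfies $Q_j(y) = 0$ for all $j \geq 0$. Combined with $y$ being even-degree, Lemma~\ref{lem-tob-milnor-der} then places $y$ inside $\tob H^*(G)$.

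The subtle point—what one might call the ``obstacle,'' though it is a mild one—is recognizing that the induction must range over all of $E$ simultaneously rather than over a single fixed element; this is precisely what makes the Steenrod-stability hypothesis necessary, since one needs $Sq^{2^j}(y)$ to remain in $E$ to feed back into the inductive hypothesis. Once this is set up, the recursive formula for $Q_{n+1}$ does all the work, and no further ingredients are required.
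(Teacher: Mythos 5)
Your proof is correct. The paper itself offers no written argument for this lemma beyond the remark that it ``follows from'' Lemma~\ref{lem-tob-milnor-der}; the shortest route it presumably has in mind is the two-inclusion argument in which the hard direction is immediate from the definition of $\tob H^*(G)$ as the \emph{largest} unstable submodule concentrated in even degrees: the even part $E$ of $\bigcap_{i=0}^N\ker Q_i$ is by hypothesis Steenrod-stable and even, hence killed by $\ideal{Sq^1}$ (any $aSq^1b$ sends an even class of such a submodule through an odd-degree class of it, which must vanish), hence contained in $\tob H^*(G)$; the reverse inclusion is Lemma~\ref{lem-tob-milnor-der}. You instead prove the hard direction by inducting on $j$ with the recursion $Q_{j}=Sq^{2^{j}}Q_{j-1}+Q_{j-1}Sq^{2^{j}}$, using Steenrod-stability of $E$ to keep $Sq^{2^{j}}(y)$ inside $E$ so the inductive hypothesis applies to it, and only then invoking Lemma~\ref{lem-tob-milnor-der}. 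This is a genuinely different (and entirely valid) mechanism: it stays at the level of the Milnor derivations and makes completely explicit where the stability hypothesis enters, at the cost of redoing by hand a propagation that the ``largest submodule'' characterization gives for free. Your identification of the one subtle point --- that the induction must run over all of $E$ at once so that $Sq^{2^{j}}(y)$ can be fed back in --- is exactly right.
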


This lemma provides an easy test for completion. Moreover:

\begin{prop}
There exists an $N$ as in the lemma. In other words, the computation
of $\tob H^*(G)$ terminates in finite time.
\end{prop}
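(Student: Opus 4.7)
The plan is to show that the descending chain of subalgebras $A_N := \bigcap_{i=0}^N \ker Q_i$ of $H^*(G)$ stabilizes at some finite $N_0$, so that $A_{N_0} = A_\infty := \bigcap_{i \ge 0} \ker Q_i$. By the preceding lemma the even part of $A_\infty$ is $\tob H^*(G)$, and I would verify directly that this even part is Steenrod-stable; the hypothesis of the preceding lemma is then met at $N = N_0$ and the computation halts.

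First observe that each $Q_i$ is a derivation on an $\f_2$-algebra, hence annihilates every square. So each $A_N$ contains the subring $R = \f_2[x_1^2, \ldots, x_k^2]$, where $x_1, \ldots, x_k$ is any finite set of $\f_2$-algebra generators of $H^*(G)$. Since $R$ is Noetherian and $H^*(G)$ is a finite $R$-module (spanned over $R$ by the $2^k$ square-free monomials in the $x_j$'s), $H^*(G)$ is Noetherian as an $R$-module and every $A_N$ is an $R$-subalgebra of finite type.

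For the stabilization I would use the recursion $Q_{N+1} = Sq^{2^{N+1}} Q_N + Q_N Sq^{2^{N+1}}$, which for $x \in A_N$ yields the identity $Q_{N+1}(x) = Q_N(Sq^{2^{N+1}}(x))$. A short induction then shows: if $A_N$ happens to be closed under all Steenrod squares, then $A_N = A_{N+j}$ for every $j \ge 0$, so $A_N = A_\infty$. The problem reduces to finding $N$ for which $A_N$ is Steenrod-closed; this I would derive from an ACC argument applied to the ascending chain of $R$-subalgebras of $H^*(G)$ obtained by successively adjoining the Steenrod-images $Sq^k(A_N)$ to $A_N$, using Noetherian-ness of $H^*(G)$ over $R$, together with the subring structure of each $A_N$.

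Once $A_{N_0} = A_\infty$ is attained, Steenrod-stability of $\tob H^*(G)$ is verified directly: $\ideal{Sq^1}$ is a two-sided ideal of the Steenrod algebra, so every even square $Sq^{2k}$ preserves $\tob H^*(G)$; and the Adem relation $Sq^{2k+1} = Sq^1 Sq^{2k}$ places every odd square inside $\ideal{Sq^1}$ and so these kill $\tob H^*(G)$. The main obstacle is the rigorous stabilization of the descending chain $A_N$: Noetherian modules admit no DCC in general, and the argument must carefully combine the $R$-subalgebra structure of each $A_N$ with the recursion above to convert the descending-chain problem into an ascending-chain problem to which Noetherian-ness applies.
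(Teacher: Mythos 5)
Your individual reductions are correct --- the identity $Q_{N+1}(x)=Q_N(Sq^{2^{N+1}}x)$ for $x\in A_N$, the consequence that a Steenrod-closed $A_N$ forces $A_N=A_{N+j}$ for all $j\ge 0$, and the fact that each $A_N$ contains $R=\f_2[x_1^2,\dots,x_k^2]$ with $H^*(G)$ finite over $R$ --- but the proof has a genuine gap exactly where you locate it yourself: you never prove that some $A_N$ is Steenrod-closed, i.e.\ that the descending chain $(A_N)$ stabilizes. This ``reduction'' is in fact an equivalence rather than progress: $A_N$ is Steenrod-closed precisely when $A_N=A_\infty$, since $A_\infty$ is Steenrod-stable and, conversely, Steenrod-closedness of $A_N$ propagates the equalities $A_N=A_{N+j}$. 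The ACC argument you sketch --- successively adjoining the images $Sq^k(A_N)$ --- only shows that the Steenrod closure of a \emph{fixed} $A_N$ is reached in finitely many steps; it says nothing about whether that closure equals $A_N$, and it does not produce an ascending chain indexed by $N$ to which Noetherianness could be applied. Since descending chains of $R$-submodules of a Noetherian module need not stabilize (here they stabilize in each degree separately, but not uniformly in the degree), the conclusion does not follow from what you have written.

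The paper sidesteps the descending chain entirely by applying the ascending chain condition on the side of the derivations themselves: $Der(A,A)\cong Hom_A(\Omega_{A/\f_2},A)$ embeds in a finite free $A$-module because $\Omega_{A/\f_2}$ is generated by $dx_1,\dots,dx_k$, hence $Der(A,A)$ is a finitely generated module over the Noetherian ring $A=H^*(G)$. The ascending chain of submodules generated by $Q_0,\dots,Q_N$ therefore stabilizes at some $N_0$, so every $Q_N$ with $N>N_0$ is an $A$-linear combination of $Q_0,\dots,Q_{N_0}$, and consequently $\bigcap_{i=0}^{N_0}\ker Q_i\subseteq\ker Q_N$ for all $N$. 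That single observation is the missing ascending-chain argument; substituting it for your stabilization paragraph completes the proof (and renders your surrounding reductions unnecessary, though they are not wrong).
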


\begin{proof}
Let $A= H^*(G)$, and let $\Omega = \Omega_{A/\f_2}$ be the module of
$\f_2$-differentials of $A$. Then $Der(A, A) = Hom_A(\Omega, A)$.

If $A$ is generated by elements $x_1, \ldots, x_n$ as an algebra, then
$\Omega$ is generated by the elements $dx_1, \ldots, dx_n$ as an
$A$-module, and in particular it is finitely generated. Thus
$Hom_A(\Omega, A)$ injects in a free $A$-module of finite rank, and
since $A$ is Noetherian, it follows that $Der(A,A)$ is finitely
generated as an $A$-module.

Thus for all $N$ larger than some $N_0$, the derivation $Q_N$ is an
$A$-linear combination of the derivations $Q_0, \ldots, Q_{N_0}$, and
we see that $$\ker Q_N \subset \bigcap_{i=0}^{N_0} \ker Q_i.$$ Therefore
we may take $N_0$ in the lemma.
\end{proof}

\subsection{Results}

\subsubsection{Success and failure.}
We have attempted to go through the above procedure for all the groups
$G$ ``at hand'', namely all those for which the computation of
Stiefel-Whitney classes was completed and for which $H^*(G) =
\w(G)$. In such cases the action of the Steenrod operations is already
given, while for other groups more work would be needed to find out
the action of each $Sq^k$ on cohomology classes which are not
Stiefel-Whitney. So we had $107 - 13 = 94$ groups to try (see
\S\ref{sec-results}).

In principle there is nothing to prevent the calculation from reaching
its end. However in practice, it may happen that the computer runs
out of memory, or that the computation takes simply too long. We have
obtained answers in the following cases: for all $5$ groups of order
$8$, for $10$ groups of order $16$, for $17$ groups of order $32$, and
for $30$ groups of order $64$, a total of $62$ groups.

Among these, $38$ groups $G$ have $\C(G) = \ch(G) = \tob H^*(G)$ (and
of course all three are explicitly presented). It is interesting to
note that, in the remaining cases, one has at least
$$ \C(G)/\sqrt(0) = \ch(G)/\sqrt(0) = \tob H^*(G)/\sqrt(0)  $$
where $\sqrt(0)$ denotes the ideal of elements which square to $0$
({\em not} the radical). This is slightly stronger than what the
general theory predicts, which is that $\C(G) \to \tob H^*(G)$ is an
$F$-isomorphism (in other words, it is known that for any $x \in\tob
H^*(G)$, one has $x^{2^n}\in\C(G)$ for $n$ sufficiently large).






\subsubsection{A worked out example.}
Let us consider $G= (\z/2)^3$. It is easy to perform the computations
by hand (see below), but this example will serve well to illustrate
what the computer does.

We have $H^*(G)=\f_2[x, y, z]$ where $x=w_1(r_1)$, $y=w_1(r_2)$, and
$z = w_1(r_3)$ for some $1$-dimensional, real representations $r_1,
r_2$ and $r_3$. One has
$$ Sq^1 = x^2 \frac{\partial} {\partial x} +  y^2 \frac{\partial}
{\partial y} + z^2 \frac{\partial} {\partial z}.  $$
In the notations above, we take $A=H^*(G)$ and $B= \f_2[x^2, y^2,
  z^2]$. Generators for $A$ as a $B$-module are $1, x, y, z, xy, xz,
yz, xyz$ (here $A=\tilde A$ and $B= \tilde B$). 

One computes $d_2 = Sq^1 x = x^2 = (x^2, 0, 0, 0, 0, 0, 0, 0 ) \in B^8$,
then $d_5 = Sq^1 xy = x^2y + xy^2 = (0, y^2, x^2, 0, 0, 0, 0)$, and so on.

The module of syzygies for the elements $d_1, \ldots, d_8$ has 21
generators. However as an algebra, we find that
$$ \ker Sq^1 = B[a_1, a_2, a_3, a_4]  $$
where
$$ a_1=  z^2x + zx^2 \qquad  a_2= y^2x + yx^2$$
$$ a_3= y^2z + yz^2 \qquad a_4=  y^2zx + yz^2x + yzx^2.$$
The even part of $\ker Sq^1$ is not stable under the Steenrod
operations, for
$$ Sq^2(a_4) = y^4zx + yz^4x + y^2z^2x^2 + yzx^4 $$
and applying $Sq^1$ to the right hand side does not give $0$.

So we move to
$$ Q_1 = x^4 \frac{\partial} {\partial x} +  y^4 \frac{\partial}
{\partial y} + z^4 \frac{\partial} {\partial z}.  $$
Now $A=\ker Sq^1$ while $B$ stays the same. The generators for $A$ as
a $B$-module are $1$, $a_4$, $a_3$, $a_2$, $a_1$, $a_3a_4$, $a_2a_4$,
$a_2a_3$, $a_1a_4$, $a_1a_3$, $a_1a_2$, $a_2a_3a_4$, $a_1a_3a_4$,
$a_1a_2a_4$, $a_1a_2a_3$, $a_1a_2a_3a_4$.

One applies $Q_1$ to these and then computes the module of
syzygies. There are 27,730 generators for this module, and only one
(!) generator for $\ker Q_1\cap \ker Sq^1$ as an algebra, namely
$$ \ker Q_1\cap \ker Sq^1 = B[ y^4a_1 + y^2x^2a_1 + z^4a_2 +
  z^2x^2a_2 ].  $$
The even part of this algebra is $B$, on which every $Q_i$ clearly
vanishes (every element of $B$ being a square in $H^*(G)$). In
conclusion
$$ \tob H^*(G) = \f_2[x^2, y^2, z^2].  $$
Of course $w_1(r_i)^2 = c_1(r_i\otimes \c)$ ($i=1,2,3$), so $\tob
H^*(G)$ is generated by Chern classes and $\C(G)=\tob H^*(G)=\ch(G)$.

Similar results hold for $(\z/2)^n$ for any $n$, and the shortest
proof is by induction (in good cases one can use a K\"unneth formula
for $\tob H^*(G)$, see \cite{lionel}).

\appendix 
\titleformat{\section}{\bf\center}{Appendix:}{1em}{}
\titleformat{\subsection}{\bf}{\Alph{section}.\arabic{subsection}.}{.5em}{}
\section{Theoretical considerations}\label{sec-theory}

In this Appendix, we expose three methods that we recommend in order
to compute the Stiefel-Whitney classes. They are all ``theoretical''
methods, in that in each case there are serious diffulties arising
when one attempts to carry the method into practice. What we have described up to this point is a way to circumvent the hard work in a lot of cases.

Throughout this section, $G$ denotes a finite group.

\subsection{The Atiyah-Evens approach}

Historically, Atiyah was the first to ask for a purely algebraic
definition of the Chern (rather than Stiefel-Whitney) classes: see
\cite{atiyah}. The first answer was provided by Evens, based on his
multiplicative ``norm'', see \cite{evens}. This approach was
generalized by Fulton and MacPherson in \cite{fulton}. We base our
discussion on \cite{kozlow}.

The basic strategy is as follows. Suppose that $G$ is a
$2$-group. Then any irreducible, complex representation of $G$ is
induced from a $1$-dimensional representation of a subgroup of $G$
(see \cite{serre}). For real representations, the corresponding
statement is: any irreducible, real representation of $G$ is induced
from a representation of a subgroup $K$ of $G$ which is either $1$- or
$2$-dimensional, and in either case obtained from a homomorphism $K\to
C$, where $C$ is a finite cyclic $2$-group of roots of unity in $\c$.

Now, the cohomology of $C$ is completely understood, of course. To be
precise, when $C$ is of order $2$, it has a nontrivial real
representation of real dimension $1$, and its first Stiefel-Whitney
class is the only nonzero class in $H^1(C, \f_2)$; if $C$ has order
$\ge 4$, it has a real, irreducible representation $V$ of dimension
$2$ obtained by viewing $C$ has a group of roots of unity, and one has
$w_1(V)=0$ while $w_2(V)=c_1(V)$ is the only nonzero class in $H^2(C,
\f_2)$. The representation of $K$ considered above is the
``restriction'' of one of these, so the Stiefel-Whitney classes may be
computed by pulling back the classes in $H^*(C, \f_2)$ to
$H^*(K,\f_2)$.

The difficult part is to obtain a formula for the Stiefel-Whitney (or
Chern) classes of an induced representation, given the corresponding
classes in the cohomology of the subgroup $K$. As noted above, Evens
was the first to provide such a formula, valid only for Chern classes,
while Fulton and MacPherson gave a very general statement. See also
Evens and D. Kahn \cite{evenskahn}, B. Kahn \cite{kahn}, and Kozlowski
\cite{kozlow2}. We give the Fulton-MacPherson formula in the case when
$K$ has index $2$ in $G$:
$$ w(Ind(r)) = N(w(r)) + \sum_{d=0}^{e-1}[(1+\mu)^{e-d}+1]N(w_d(r))  $$
There are quite a few notations to explain. Here $r$ is a
representation of $K$, and $e$ is its real dimension; $Ind(r)$ is the
representation of $G$ induced by $r$. The notation $w(\rho)$ stands
for the total Stiefel-Whitney class of $\rho$:
$$ w(\rho)= 1 + w_1(\rho) + w_2(\rho)+ \cdots  $$
which is a non-homogeneous element in the cohomology of the group of
which $\rho$ is a representation. Further, $N$ is the Evens norm from
$K$ to $G$: recall that this is a map $H^*(K, \f_2) \to H^*(G, \f_2)$
which is neither additive nor degree-preserving, but it is
multiplicative; moreover $N$ can be computed algebraically, see
\cite{carlson}. Finally, $\mu$ is the class in $H^1(G, \f_2)$
determined by the homomorphism $G \to G/K=\z/2\z$.

There is also a formula when $K$ has index greater than $2$, but it is
much more complicated. It seems easier, in this case, to consider a
series of subgroups $K\subset K_1 \subset K_2 \subset \cdots \subset
K_n=G$, each of index $2$ in the next, and to use the formula repeatedly.

\subsubsection{Pros and cons.}
The advantage of this method is its relative simplicity (compare
below). However, in practice, there is a serious obstacle to overcome:
namely, following the method may lead one to compute the cohomology of
very many subgroups of $G$, together with the Evens norm in each
case. When $G$ is large, it is an understatement to say that the
computation is discouragingly long.

\subsection{The Thom construction}

We shall now describe a discrete, or combinatorial, version of the
Thom construction, which also allows the computation of
Stiefel-Whitney classes.

\subsubsection{The topological side.}
We recall the following well-known facts (see \cite{milnor}). Let $X$
be any topological space, and let $E$ be a real vector bundle over
$X$. Also, let $E_0$ denote the complement of the zero-section in
$E$. Then there is a {\em Thom isomorphism}
$$ T: H^d(X, \f_2) \stackrel{\simeq}{\longrightarrow} H^{n+d}(E, E_0; \f_2)  $$
where $n$ is the rank of $E$. When $X$ is connected, so that there is
a unique nonzero element $1\in H^0(X, \f_2)$, we call $T(1)\in H^n(E,
E_0;\f_2)$ the {\em Thom class of $E$}. As it turns out, the Thom
isomorphism is given by cup-multiplication with $T(1)$.

Consider then the element $Sq^iT(1)$. It corresponds, via the Thom
isomorphism, to a class in $H^i(X, \f_2)$. This class is $w_i(E)$
(indeed, this is a possible {\em definition} of the Stiefel-Whitney
classes). 

If now $X=BG$ and $V$ is a real representation of $G$, we may consider
the universal $G$-principal bundle $EG \to BG$ and form from it the
vector bundle $(EG \times V) \to BG$. Call it $E$. Then
$w_i(V)=w_i(E)$.

\subsubsection{A finite CW complex acted on by $G$.}
Let us start with a real vector space $V$ of dimension $n$ and a set
of points $A=\{a_1, \ldots, a_m\}$ whose affine span is all of $V$. We
let $\Delta$ denote the convex hull of $A$, a {\em polyhedron} in $V$.

A {\em boundary plane} of $\Delta$ will mean a hyperplane of $V$ which
intersects the topological boundary of $\Delta$ but not its
interior. It follows from the Hahn-Banach theorem that the boundary of
$\Delta$ is the union of all the boundary planes.

If $P$ is a boundary plane, then $\Delta\cap P$ is the convex hull of
a subset $A'\subset A$. When the affine span of $A'$ is the whole of
$P$, we call it a {\em supporting plane}, and $\Delta\cap P$ is called
a {\em face} of $\Delta$. It is not hard to show that the boundary of
$\Delta$ is the union of the faces.

Since each $\Delta\cap P$ is itself a polyhedron in $P$, one can
define inductively the $k$-faces of $\Delta$ (which are the
$k-1$-faces of the faces). The $n$-faces are the {\em vertices} of
$\Delta$, and they form a subset of $A$. This may be a strict subset
of $A$ in general (say if $a_2$ is the middle of the segment from
$a_1$ to $a_3$), but if we have chosen a Euclidean metric on $V$ and
chosen the $a_i$'s on the unit sphere, then none of them can belong to
the interior of any $k$-face, so that the set of vertices of $\Delta$
is precisely $A$.

Topologically, $\Delta$ is an $n$-cell, and its boundary is an
$n-1$-sphere. It follows that the above decomposition into $k$-faces
yields a decomposition of $\Delta$ as a CW-complex, each $k$-face
giving an $n-k$-cell. We let $\Delta_*$ denote the corresponding mod
$2$ cell complex. Since we need not worry about the signs here, the
boundary of a $k$-face is, quite simply, the sum of its
faces. Similarly, we shall write $Bd(\Delta)$ for the boundary of
$\Delta$, and $Bd(\Delta)_*$ for the corresponding complex. The
discussion above is meant to show explicitly how to compute the above
cell complexes in finite time.

The case of interest to us is that of an irreducible real
representation $V$ of the group $G$, and $A= \{ g_ov, g_1v, \ldots,
g_nv \}$ for some nonzero $v\in V$, where the elements of $G$ have
been written $g_0, \ldots, g_n$. The vector span of $A$ is $V$ since
$V$ is irreducible. Assuming that $V$ is nontrivial, we see that the
invariant element $g_0v + \cdots + g_nv$ is $0$, so that the
barycenter of $\Delta$ is the origin in $V$, and it follows that the
{\em affine} span of $A$ is $V$. We may assume that there is a
$G$-equivariant Euclidean metric on $V$, so that the vertices of
$\Delta$ are the points $g_iv$.

There is an action of $G$ on $A$ and also on $\Delta$ and $\Delta_*$;
we see the latter as a complex of $\f_2[G]$-modules. Note that there
is a homeomorphism from $\Delta$ to the unit ball in $V$, carrying its
boundary to the unit sphere, defined by sending each ray emanating
from the origin to a corresponding ray in the same direction, with an
appropriate rescaling. Since the action of $G$ on $V$ is linear, this
homeomorphism is $G$-equivariant.

\subsubsection{Resolutions.} We continue with the notations for $V$
and $\Delta$. If $P_*$ is any projective resolution of
$\f_2$ as an $\f_2[G]$-module, we define $\bar P_*$ to be the cokernel
of 
$$ P_*\otimes_{\f_2} Bd(\Delta)_* \to  P_*\otimes_{\f_2} \Delta_*.  $$
Any chain homotopy between $P_*$ and $Q_*$ yields a chain homotopy
between $\bar P_*$ and $\bar Q_*$. Thus we are free to pick the
projective resolution that suits our needs.

For example, we may choose for $P_*$ the cell complex of $EG$, the
universal $G$-space. Then one knows how to put a CW structure on
$EG\times \Delta$ so that the corresponding cell complex is just
$P_*\otimes \Delta_*$, and likewise for $EG\times Bd(\Delta)$. We see
in this fashion that
$$ \begin{array}{rcl}
H^*(\bar P_*) & \simeq  & H^*((EG\times \Delta)/G,
(EG\times Bd(\Delta))/G; \f_2)  \\
                             &  =       & H^*( E, E_0; \f_2).
\end{array}$$
Here $E$ is the (total space of) the vector bundle $(EG\times V)/G$
over $BG$, as above. Also, the upper star on $H^*(\bar P_*)$ is meant
to indicate the homology of the complex $Hom_{\f_2[G]}(\bar P_*,
\f_2)$. Therefore we have a Thom isomorphism and in particular, we have
a Thom class $T(1)$ of degree $n$ in $H^*(\bar P_*)$.

On the other hand, we may pick the minimal resolution as our $P_*$. In
this way $\bar P_*$ becomes computable.

\subsubsection{Steenrod operations.} To complete the analogy with the
 topological approach, we need to define the elements
$Sq^iT(1)$. There is indeed an algebraic definition of the Steenrod
operations, in terms of the Evens norm map: see
\cite{carlson}. Strictly speaking, it is only defined for projective
resolutions of a module, and our $\bar P_*$ is no such thing. However,
extending the operations to this case is relatively straightforward
(it is no harder than to define Steenrod operations on relative
cohomology groups given the definition on regular cohomology groups).

Thus we do have elements $Sq^iT(1)\in H^*(\bar P_*)$, and via the Thom
isomorphism they correspond to the Stiefel-Whitney classes $w_i(V)\in
H^*(G, \f_2) = H^*(P)$. Perhaps more concretely, $w_i(V)$ is
characterized as the only element in $H^*(P)$ such that the (external)
cup product $w_i(V)\cdot T(1) = Sq^iT(1)$.




\subsubsection{Pros and cons.}
On the pros side: there is no reference to any other group than $G$,
and the procedure works directly for any group, not necessarily a
$2$-group. On the cons side, {\em one needs to know $V$ rather than
  just its character}. So we need to find matrices representing the
action of each generator of $G$. Clearly, this constitutes a rather
heavy task and seems to prevent en masse calculations with lots of
groups and lots of representations.

\subsection{The finite field trick}

\subsubsection{Universal representations.}
The last idea which we present is to try and find finite groups $G_n$
for $n=1, 2, \ldots$ and for each $n$ a real representation $V_n$ of
$G_n$, such that any real representation of any group $G$ is the pull-back
of some $V_n$ under some homomorphism $f: G \to G_n$. Essentially, we
shall explain that $G_n= O_n(\f_3)$ fits the purpose (under some
hypotheses which are satisfied for us). Moreover, for a given $G$ with
a representation $V$, we can take $n= \dim V$.

Granted this, one can compute the Stiefel-Whitney classes of $V_n$ in
the cohomology of $G_n$, and pull them back using $f$. However
complicated the computations with $G_n$ may be, once they are done for
all $n\le N$ we are able to perform rapidly many computations with any
$G$ whose representations are of dimension $\le N$. Note that the
maximal dimension of an irreducible representation of $G$ grows much
more slowly than the size of $G$.

\subsubsection{Reducing mod $3$.}
We shall be concerned with real representations of real type of a
finite group $G$. We recall that a real representation $r$ of a finite
group can be of real, complex, or quaternion type. In the complex or
quaternion case, $r$ carries a complex structure, and its
Stiefel-Whitney classes can be computed from the Chern classes (see \S
\ref{sec-formal}). In order to deal with these Chern classes, one may
follow the procedure below, replacing $O_n$ by $GL_n$ and
Stiefel-Whitney by Chern throughout (details left to the reader). The
real case is the more delicate one.

Now, an irreducible real representation $V$ is of real type if and
only if its complexification is still irreducible. Alternatively, an
irreducible complex representation is the complexification of such a
real representation of real type if and only if it carries a
$G$-equivariant symmetric bilinear form. Here is a first
application. Assume from now on that $G$ is a $2$-group. Then any
irreducible complex representation is induced from a $1$-dimensional
representation of a subgroup. It follows that an irreducible real
representation $V$ of real type is induced from a real,
$1$-dimensional representation of a subgroup. As a result, we see that
$V$ may be realized with matrices with integer coefficients (indeed,
involving only $1$, $-1$ and $0$), and in such a way that $G$
preserves the symmetric bilinear form given by the identity matrix.

Therefore it makes sense to reduce all those entries mod $3$, say (any
odd prime would do). We obtain a representation $\bar V$ of $G$ over
$\f_3$, for which the standard quadratic form is $G$-invariant. Hence
we end up with a homomorphism $f: G \to O_n(\f_3)$.

The group $O_n(\f_3)$ has a canonical (defining) representation $\bar
V_n$ over $\f_3$. It is tautological that, if $V$ is as above, then
$\bar V = f^*(\bar V_n)$ (here $f^*$ means the pull-back along $f$).

\subsubsection{Going back to characteristic $0$.}
Now we use a {\em Brauer lift} of $\bar V_n$: this is a virtual
representation $V_n$ of $O_n(\f_3)$ over $\z_3$ whose mod $3$
reduction is the given $\bar V_n$. Brauer lifts always exist according
to \cite{serre}. Moreover in our case Quillen in \cite{quillenadams}
has observed that $V_n$, when viewed as a representation over $\c$,
carries an $O_n(\f_3)$-invariant quadratic form. Thus it is the
complexification of a real representation and it makes sense to speak
of its Stiefel-Whitney classes $w_i(V_n)\in H^*(O_n(\f_3), \f_2)$.

If one considers the virtual representation $f^*(V_n)$ of $G$, one
observes that its reduction mod $3$ is $f^*(\bar V_n) = \bar
V$. However, since $G$ is a $2$-group, the process of reducing mod $3$
is an isomorphism
$$ R_{\q_3}(G) \stackrel{\simeq}{\longrightarrow} R_{\f_3}(G)  $$
by \cite{serre}. It follows that $V$ and $f^*(V_n)$ are isomorphic
over $\q_3$; hence they are isomorphic over $\c$ as well; since they
are each, over the complex numbers, the complexification of a
(possibly virtual) real representation, it follows that the
corresponding real representations are isomorphic, and have thus the
same Stiefel-Whitney classes. The bottom line being that
$w_i(V)=f^*(w_i(V_n))$.

\subsubsection{Pros and cons.}
Computing the Stiefel-Whitney classes of sufficiently many $V_n$'s is
not such a tall order. First, the computation for a given $N$ yields
in fact the result for all $n\le N$ by restriction. Second, one need
not take $N$ very large: say for groups of order dividing $64$, the
dimension of a real representation of real type cannot be more than
$4$, so dealing with $O_4(\f_3)$ should be enough to treat these 340
groups (to be honest, we must recall that one must also take care of
$GL_4(k)$ for some finite field $k$ containing $\f_3$ in order to deal
with the Chern classes). Finally, we note that the mod $2$ cohomology
of $O_n(\f_3)$ is rather tractable, since it is detected on a product
of dihedral groups (which are well-understood), see \cite{fiedo}. It
{\em is} an issue in practical terms, however, that we need to find
explicit cocycles for these Stiefel-Whitney classes.

A significant disadvantage is, as above, that we need to know $V$ in
terms of matrices, rather than just its character, if we are to
compute the map $f$.

\bibliography{myrefs}
\bibliographystyle{siam}
\end{document}